\documentclass[12pt]{article}

\usepackage{epic}
\usepackage{amsmath}[1996/11/01]
\usepackage{url,amssymb,amsthm,amsfonts,latexsym,epsfig,graphics}
 \def\beql#1#2\eeql{\begin{equation}\label{#1}#2\end{equation}}

\hsize = \textwidth
\DeclareMathOperator{\ind}{ind}
\DeclareMathOperator{\inv}{inv}
\DeclareMathOperator{\Inv}{Inv}

\DeclareMathOperator{\Br}{Br}
\DeclareMathOperator{\Gal}{Gal}
\DeclareMathOperator{\sol}{sol}

\DeclareMathOperator{\Irr}{Irr}

\DeclareMathOperator{\GL}{GL}

\DeclareMathOperator{\diag}{diag}

\theoremstyle{plain}
\newtheorem{theorem}{Theorem}
\newtheorem{lemma}[theorem]{Lemma}

\newtheorem{assumption}[theorem]{Assumption}

\newtheorem{corollary}[theorem]{Corollary}
\newtheorem{definition}[theorem]{Definition}
\theoremstyle{remark}

\newtheorem{remark}[theorem]{Remark}

\numberwithin{theorem}{section}

\newcommand{\PP}{{\mathbb{P}}}
\newcommand{\Z}{{\mathbb{Z}}}
\newcommand{\Q}{{\mathbb{Q}}}

\newcommand{\N}{{\mathbb{N}}}
\newcommand{\R}{{\mathbb{R}}}

\newcommand{\C}{{\mathbb{C}}}
\newcommand{\trace}{\mbox{trace}}

\title{The solvable degree of a Schur algebra}
\author{Gabriele Nebe\footnote{gabriele.nebe@rwth-aachen.de}\  and David Schlang\footnote{david.schlang@rwth-aachen.de}}
\date{Lehrstuhl f\"ur Algebra und Zahlentheorie,  \\
RWTH Aachen University, Germany}

\begin{document}
\maketitle

 {\sc Abstract.} 
 The solvable degree 
 of a Schur algebra $D$ is the greatest common divisor of 
 all integers $n$ such that $D^{n\times n}$ is a direct summand of the rational
 group algebra $\Q G$ for some finite solvable group $G$. 
 The main result of this paper gives divisibility conditions on the solvable 
 degree depending on the center and the rational primes that divide the
 discriminant of $D$. 
 \\
 {\sc MSC:} 20C05 \\
  {\sc keywords:}  Schur subgroup of Brauer group; direct summands of group algebras; 
  Schur index; character degree

\section{Introduction} 

An interesting number theoretic invariant of a 
finite group $G$ is its rational group ring $\Q G $. 
This is a $|G|$-dimensional semi-simple $\Q $-algebra, 
 $$\Q G = \bigoplus _{i=1}^s D_i^{n_i \times n_i }$$ 
 for rational division algebras $D_i$. 
Their centers, $K_i := Z(D_i)$, 
are exactly the character fields 
 $\Q(\chi _i)$ for the (Galois orbits of) irreducible complex characters $\chi _i$. 
 In particular $K_i = \Q[\chi _i(g) : g\in G]$ is generated by sums of $|G|$-th 
 roots of unity and hence an abelian extension of $\Q $.
 Following Turull \cite{Turull} we call the class $[D_i]:= [ \chi _i ] $ the (inverse) 
 Brauer element of $\chi _i$ in the Brauer group  $\Br(K_i)$ of central simple $K_i$-algebras. 


The Schur subgroup $S(K)$ of the Brauer group $\Br(K)$
is the group of classes $[D]$ of all central simple $K$-algebras 
for which there is some $n\in \N$ and some finite group $G$ such that 
$D^{n\times n}$ is a direct summand of the rational group algebra $\Q G$. 
 The group $S(K)$ has been extensively studied since the middle of last century 
(see \cite{Yamada} and the more than 70 papers citing this book). 
 We call a  division algebra $D$ a {\em Schur algebra} 
 if  $[D] \in S(Z(D))$.
 The {\em degree} $\deg(D)$ (respectively the {\em solvable degree} $\deg_{\sol}(D) $) 
 of a Schur algebra $D$ is the 
 greatest common divisor of the values $n$, such that 
 $D^{n\times n}$ is a direct summand of $\Q G$ for some finite group  
 $G$ (respectively some finite solvable group $G$). 
 Clearly $\deg(D) $ divides $\deg_{\sol}(D)$. There are Schur algebras 
 $D$, where $\deg(D) \neq \deg_{\sol}(D)$ (see Remark \ref{degeq1}). 

 For solvable groups, there are 
divisibility relations for the character degrees depending on 
the character field (see Theorem \ref{Cram}). 
 These provide our main tool to show the main result of this 
 paper, Theorem \ref{main}, giving divisibility conditions on the 
 solvable degree of a Schur algebra. 
 We also give examples that prove that these are tight for certain 
 infinite series of Schur algebras. 

 Section 2 of this paper summarizes the relevant results from the literature that 
 are used in the proof of Theorem \ref{main}. 
 Section 3 might be of independent interest; here we generalize the results 
 from \cite{Yamadamon} to obtain conditions on the Schur index of an induced 
 character. A more sophisticated theory based on Tignol's notion of generalised crossed 
 product algebras is developed in \cite{Schlang}. 
 Section 4 then introduces the degree and solvable degree of a Schur algebra 
 and the last section is devoted to the statement and proof of our main result. 

The paper is a contribution to Project-ID 286237555 -- TRR 195 -- by the
Deutsche Forschungsgemeinschaft (DFG, German Research Foundation).

 \section{Characters} 

 \subsection{The Brauer element} 

	The set of rational primes is denoted by 
	$\PP :=\{ p \in \N \mid p \mbox{ is a prime} \} $.
\\
Let $K$ be a number field and $D$ be a division algebra with center $K$. 
Then $\dim_K(D) = m^2$ where $m:=\ind(D)$ is the index of $D$. 
Note that the index, $m$, is also the order of $[D]$ in the Brauer group $\Br(K)$.
Division algebras over number fields satisfy the Hasse principle, 
i.e.. they are isomorphic, if and only if all their completions are 
isomorphic: 
\\
 A place $P$ of $K$ is either a prime ideal of the ring of
      integers in $K$ (a finite place) or an embedding into the
        real or complex numbers (an infinite place).
The Brauer group of the corresponding completion 
$\Br (K_P)$ is either isomorphic to $\Q/\Z $ (for the finite places),
to $\Br(\R) \cong \frac{1}{2}\Z/\Z$, or to $\Br(\C) = 1$.
        The completion 
        $A_P = D\otimes_K K_P$ of $D$  is a matrix ring over some $K_P$-division
        algebra $A_P = D_P^{n_P\times n_P} .$
	The division algebra $D_P$ is uniquely determined by its 
	{\em Hasse invariant} 
	$$\inv _P(D) = \inv(D_P) = \frac{a_P}{m_P} + \Z \in \Br (K_P )$$
	where $m_P = m/n_P$ is the index of $D_P$.

 For a finite group $G$ we denote by 
 $$\Irr (G) := \{ \chi : G \to \C \mid \chi \mbox{ is an irreducible character } \} $$
 the set of irreducible complex characters of $G$. 
 The characters $\chi \in \Irr(G) $ correspond to the conjugacy classes of 
 irreducible complex representations 
 $$\rho : G \to \GL_{\chi (1)} (\C ), \chi (g) = \trace (\rho  (g)) \mbox{ for all } g\in G .$$
 We then also say that $\rho $ affords the character $\chi $.
 The character $\chi $ is called {\em faithful} if $\rho $ is 
 injective. 
The $\Q $-algebra spanned by $\rho (G)$ is a simple algebra, 
$$\langle \rho (g) \mid g\in G \rangle _{\Q } \cong D^{n\times n} \leq \C ^{\chi(1) \times \chi(1) } $$ 
where $K := Z(D) = \Q(\chi  )$ is the character field of $\chi$.

 \begin{definition} 
			 Let $G$ be a finite group,
			 $\chi \in \Irr(G)$,  $K=\Q(\chi )$, $\rho $ and $D$ as above.
	 \begin{itemize} 
		 \item[(a)] (see \cite{Turull}) 
			 The {\em inverse Brauer element} of $\chi $ is the class
	 $$[\chi ] = [D ]\in \Br (K) $$  
	 in the Brauer group of $K$.
 \item[(b)]  
	 The {\em Schur index} $m(\chi)$
			 is the order  
			 $m(\chi ) = |\langle [\chi ] \rangle |$ 
			 of the Brauer element of $\chi $.
		 \item[(c)] For two fields 
 $K\leq L$  we denote the natural morphism
	$\Br(K ) \to \Br(L), [D] \mapsto [D\otimes L] =: [D]\otimes L$.
 \item[(d)]
For a place $P$ of $K$ we define 
			 the $P$-local Schur index 
			 $m_P(\chi )$ to be the order of $[\chi ]\otimes K_P \in \Br(K_P)$
			 in the Brauer group of the $P$-adic completion of $K$.
		 \item[(e)] 
			 The division algebras $D$ with $[D] = [\chi] $ 
			 for some finite group $G$ and 
	 some $\chi \in \Irr(G)$ are called {\em Schur algebras}. 
	 The Schur subgroup $S(K)$  of the Brauer group $\Br(K)$ is the 
	 subgroup consisting of all classes of Schur algebras with center $K$. 
	 \end{itemize}
 \end{definition}

 \begin{remark} \label{mdivdeg}
	 As the index of $[\chi ]$ always divides the character 
	 degree $\chi (1)$ we state the 
	 well known fact that $m(\chi ) $ divides $\chi (1)$.
 \end{remark}

 Note that Turull defines the Brauer element of $\chi $ as 
 the inverse of our inverse Brauer element. 
 As the natural involution on $\Q G$, inverting the group elements, 
 maps $[\chi ] $ to $[\chi ]^{-1} = [\overline{\chi } ]$, the 
 Brauer element of $\chi $ is just the inverse Brauer element of 
 the complex conjugate character $\overline{\chi }$.
 So both have the same degree and also the same solvable degree.

Algorithms for computing Schur indices of characters are developed in 
\cite{Lorenz} and \cite{Unger}. They are based on the restriction 
to so called elementary subgroups and the following result from Brauer: 

\begin{theorem} (see \cite[Section (2A)]{Brauer}, \cite[Lemma 3.2]{Unger}) \label{brauer}
	Let $U \leq G$ be finite groups, $\chi \in \Irr (G)$ 
	and $\psi \in \Irr (U)$, 
	occurring in the restriction 
	$\chi _{|U}$ with multiplicity $e:= (\chi _{|U}, \psi)_U$. 
	If $p \in \PP $ does not divide 
	$e [\Q(\psi , \chi  ): \Q(\chi ) ]$ then the $p$-part of 
	the Schur index of $\psi $ over $\Q(\chi )$ 
	equals to the $p$-part of $m(\chi )$. 
\end{theorem}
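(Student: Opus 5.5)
We prove this classical result of Brauer through the simple components of the group algebra $F G$ and $F U$, where $F := \Q(\chi)$ and $L := \Q(\psi,\chi) = F(\psi)$. Let $A$ be the simple component of $F G$ belonging to $\chi$; its class in $\Br(F)$ is $[\chi]$ and its index is $m(\chi)$. Let $B$ be the simple component of $L U$ belonging to $\psi$. Then $[B]\in \Br(L)$ is the class whose index is, by definition, the Schur index $m_F(\psi)$ of $\psi$ over $F$.

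The first step is to produce a module linking $A$ and $B$. Let $V$ be the simple $A$-module. Restricting $V$ to $U$ and then extending scalars to $L$, the $\psi$-isotypic part $W := e_\psi (V\otimes_F L)$ (with $e_\psi$ the central idempotent of $\psi$ in $LU$) is a nonzero $B$-module. Its $\C$-character is $e\psi$ plus possibly Galois conjugates, so counting $\psi$-multiplicities gives the key relation: $W\cong S_B^{k}$, where $S_B$ is the simple $B$-module, with $k\, m_F(\psi) = e$ up to the factor coming from the Galois conjugates of $\psi$ over $F$. Those conjugates are controlled by $[L:F]$, which is why that degree appears in the hypothesis.

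The second step uses the centralizers. The commuting algebra $C:=\mathrm{End}_{LU}(W)\cong M_k(B^{\mathrm{op}})$ contains the image of $\mathrm{End}_{FG}(V)\otimes_F L = D^{\mathrm{op}}\otimes_F L$, where $D$ is the division algebra with $[D]=[\chi]$. A standard double-centralizer argument then shows that $[B]\otimes$ and $[D]\otimes_F L$ agree up to a class whose order divides $k$. Concretely, one works locally at each place and compares Hasse invariants: $\inv_P(B)$ differs from $[L_P:F_P]\inv(D_P)$ by something killed by $e$. This yields two divisibilities. First, $m_F(\psi)$ divides $m(\chi)\cdot e$. Second, $m(\chi)$ divides $m_F(\psi)\cdot e\,[L:F]$, because restriction to $L$ followed by corestriction multiplies by $[L:F]$. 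Taking $p$-parts with $p\nmid e[L:F]$ gives equality of the $p$-parts.

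The main obstacle is establishing the precise centralizer relation. The cleanest route is to avoid modules over division algebras and instead use the Brauer–Speiser/Schur relation $m_F(\psi)\mid e\,[F(\psi):F]\cdot m(\chi)$-type inequalities. These come from Frobenius reciprocity: $\chi$ occurs in $\psi^G$ with multiplicity $e$, so realizing $\psi$ over a splitting field $E\supseteq L$ of degree $m_F(\psi)$ shows $E$ has degree dividing $e\,[L:F]\cdot(\ldots)$ relative to splitting $\chi$. Conversely, realizing $\chi$ over a field $E'$ splitting $[\chi]$ gives $e\psi$ realized over $E'L$, hence $m_F(\psi)\mid e\,m(\chi)$.
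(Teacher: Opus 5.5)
The paper gives no proof of this statement; it is quoted from \cite{Brauer} and \cite{Unger}. The argument there is the one you are aiming at: with $F=\Q(\chi)$ and $L=\Q(\psi,\chi)$, one shows $m_F(\psi)\mid e\,m(\chi)$ and $m(\chi)\mid e\,[L:F]\,m_F(\psi)$, then compares $p$-parts. Those two targets are right, but your derivation of them does not hold up.

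\textbf{The key relation is false.} The simple $A$-module $V$ affords $m(\chi)\chi$, not $\chi$. Since $L\supseteq\Q(\psi)$, the idempotent $e_\psi$ cuts out $\psi$ alone. So $W\otimes_L\C$ affords exactly $m(\chi)\,e\,\psi$, and $k\,m_F(\psi)=e\,m(\chi)$; Galois conjugates play no role. Your version already fails for $U=G=Q_8$, $\psi=\chi$ of degree $2$, where it gives $2k=1$.

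\textbf{The centralizer step is also wrong as stated.} $\mathrm{End}_{LU}(W)\cong M_k(\Delta^{\mathrm{op}})$, where $\Delta$ is the division algebra underlying $B$, not $M_k(B^{\mathrm{op}})$. The discrepancy class has index dividing $e$, not $k$.

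\textbf{Consequently the decisive step is missing.} The step you yourself call the main obstacle is never carried out. Your fallback route has a literal placeholder ``$(\ldots)$'' exactly where the nontrivial divisibility $m(\chi)\mid e\,[L:F]\,m_F(\psi)$ should be proved.

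Either route closes quickly once the numbers are correct.

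\emph{Centralizer version.} We have $\dim_L\mathrm{End}_{LU}(W)=k^2m_F(\psi)^2=e^2m(\chi)^2$ and $\dim_L(D^{\mathrm{op}}\otimes_F L)=m(\chi)^2$. The double centralizer theorem gives $\mathrm{End}_{LU}(W)\cong(D^{\mathrm{op}}\otimes_F L)\otimes_L C'$, with $C'$ central simple of degree $e$ over $L$. Hence $[B]=[D\otimes_F L]\,[C']^{-1}$. This yields $m_F(\psi)\mid m(\chi)\,e$ and $\ind(D\otimes_F L)\mid m_F(\psi)\,e$. Then $m(\chi)\mid[L:F]\ind(D\otimes_F L)$ finishes.

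\emph{Character version.} Realize $\psi$ over some $E\supseteq L$ with $[E:L]=m_F(\psi)$. Then $\psi^G$ is realized over $E$ and, by Frobenius reciprocity, contains $\chi$ with multiplicity $e$. As $E\supseteq\Q(\chi)$, the index $m_E(\chi)$ divides the $\chi$-multiplicity of every $E$-character, so $m_E(\chi)\mid e$. Restriction of scalars from $E$ to $F$ then gives $m(\chi)\mid[E:F]\,m_E(\chi)=[L:F]\,m_F(\psi)\,m_E(\chi)$.

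The reverse divisibility is your $E'$ argument, taking $[E':F]=m(\chi)$. With either fix in place, your proof becomes the standard one.
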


Theorem \ref{brauer} also allows for computations of the inverse Brauer element, 
where we only need one special case:

\begin{corollary} \label{brauer1}
	 Let $U \leq G$ be finite groups, $\chi \in \Irr (G)$
        and $\psi \in \Irr (U)$. 
	If $\psi $ occurs in $\chi _{|U}$ with multiplicity 1 and 
	$\Q ( \psi ) \leq \Q(\chi )$ then $[\chi ] = [\psi ]\otimes \Q(\chi )$.
\end{corollary}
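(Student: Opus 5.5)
We derive Corollary \ref{brauer1} from Theorem \ref{brauer} one prime at a time. The hypothesis $\Q(\psi)\leq \Q(\chi)$ gives $\Q(\psi,\chi)=\Q(\chi)$, so $[\Q(\psi,\chi):\Q(\chi)]=1$, and by assumption $e=1$. Hence no prime $p$ divides $e[\Q(\psi,\chi):\Q(\chi)]$. Theorem \ref{brauer} therefore applies to every $p\in\PP$, and the Schur index of $\psi$ over $K:=\Q(\chi)$ equals $m(\chi)$. This only compares orders in $\Br(K)$, though, and we need equality of the classes themselves. So we will use the Hasse principle and pass to completions.

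The key step is to show that the class itself is preserved. Let $D_\chi$ be the simple component of $\Q G$ for $\chi$ and $D_\psi$ the one for $\psi$. Then $[D_\psi]\otimes K$ is the class of the simple component of $K U$ belonging to $\psi$, because $\Q(\psi)\leq K$. Write $V$ for the irreducible $K G$-module, or rather its $D_\chi$-part, carrying $\chi$ with multiplicity $m(\chi)$. Restricting $V$ to $U$, the $\psi$-isotypic part $W$ is a $K U$-module on which $\mathrm{End}_{KG}(V)$ acts by centralizing $KU$. Since $e=1$, the centralizer of the $\psi$-component is analysed as follows: the endomorphism ring $\mathrm{End}_{KG}(V)\cong D_\chi^{op}$ embeds into $\mathrm{End}_{KU}(W)$, which is a matrix ring over $D_\psi\otimes K$, opposite. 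The multiplicity-one condition gives a degree match, $\dim_K W = m(\chi)\cdot \psi(1)\cdot(\text{index data})$, which forces $D_\chi^{op}$ to sit as a full centralizer. The double centralizer theorem then yields $[D_\chi]=[D_\psi\otimes K]$, up to the consistent choice of opposite conventions.

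Alternatively, and probably more safely, we can prove the equality of classes by reducing to local statements. By the Hasse principle it suffices to show $\inv_P(D_\chi)=\inv_P(D_\psi\otimes K)$ for every place $P$ of $K$. We would apply the argument above over $K_P$ instead of $K$: after completing, the $\psi$-component of $\chi_{|U}$ over $K_P$ still has multiplicity one. The centralizer argument gives $[D_\chi\otimes K_P]=[D_\psi\otimes K_P]$, since the modules and endomorphism algebras localize compatibly.

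The main obstacle is the bookkeeping of dimensions that makes the embedding of $D_\chi^{op}$ into the centralizer of $\psi$ a full Brauer-equivalence rather than a mere inclusion. We would try to handle it by counting $K$-dimensions with $\dim_K V=m(\chi)\chi(1)$ and the multiplicity $e=1$. The equal indices supplied by Theorem \ref{brauer} close the gap: an embedding of central simple algebras of the same index, with compatible centralizer, gives equal classes.
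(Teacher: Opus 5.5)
Your argument is correct once its key step is made precise, and it is the commutant form of the paper's proof. The paper works with $A=\langle \rho(g)\mid g\in G\rangle_K\cong D^{n\times n}$. Multiplicity one shows that the image of $KU$ fills the corner $e_\psi A e_\psi$, and a corner of $D^{n\times n}$ is some $D^{k\times k}$, so $[D]=[\psi]\otimes K$. You instead show that the commutant $\mathrm{End}_{KG}(V)\cong D^{\mathrm{op}}$, restricted to $W=e_\psi V$, fills the commutant $\mathrm{End}_{KU}(W)$, and you read off the class there. These are the same Morita-type fact seen from opposite sides. The corner version is shorter and avoids endomorphism rings and the double centralizer theorem. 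Yours identifies $D^{\mathrm{op}}$ itself with $\mathrm{End}_{KU}(W)$ and shows that $W$ is a simple $KU$-module. This recovers the index equality of Theorem \ref{brauer} as a byproduct instead of needing it.

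The step you leave as ``index data'' and ``compatible centralizer'' should be this dimension count. Since $V\otimes_K\C$ affords $m(\chi)\chi$ and $e=1$, $W\otimes_K\C$ is a sum of $m(\chi)$ copies of the module affording $\psi$. Hence $\dim_K\mathrm{End}_{KU}(W)=m(\chi)^2=\dim_K D$. The restriction map $D^{\mathrm{op}}\to\mathrm{End}_{KU}(W)$ is a unital $K$-algebra homomorphism out of a division algebra, so it is injective, and by the dimension count it is bijective. Therefore $\mathrm{End}_{KU}(W)$ is a division algebra, $W$ is simple, and $\mathrm{End}_{KU}(W)$ is the opposite of the division algebra in the class $[\psi]\otimes K$. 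This finishes the proof directly. Neither Theorem \ref{brauer} nor the Hasse-principle detour is needed; the latter is just the same argument after base change to $K_P$.

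Two cautions:
\begin{itemize}
\item $\mathrm{End}_{KG}(V)$ is the opposite of the division algebra $D$, not of the whole simple component you called $D_\chi$. The count must use $\dim_K D=m(\chi)^2$.
\item The principle you state at the end is false in general: an embedding of central simple algebras of the same index need not force equal classes. For instance, $(-1,-1)_{\Q}$ embeds into $((-1,-3)_{\Q})^{2\times 2}$. It works here only because the target has the same dimension as $D$.
\end{itemize}
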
 

\begin{proof}
	Put $K:=\Q(\chi )$. 
	Then 
	the order of $[\chi ]$ equals to the order of $[\psi ]\otimes K$
	in the Brauer group of $K$ by Theorem \ref{brauer}.

Let $\rho $ and $\delta $  be complex representations affording the character $\chi $ 
	respectively $\psi $  and put
	$$A := \langle \rho (g) \mid g\in G \rangle _{K} \mbox{ and } 
	B := \langle \delta  (u) \mid u\in U \rangle _{K} .$$
	Then $A\cong D^{n\times n}$ for some $n$, 
	where  $D$ is the central $K$-division algebra 
	with $[D] = [\chi ]$.
	Put 
	$$e_{\psi }:= \frac{1}{|U|} \sum_{u\in U} \psi(u^{-1}) \rho (u)  .$$
	Then $e_{\psi }$ is an idempotent in $A$ and  
	 $e_{\psi } A e_{\psi } \cong B $ because 
	 the multiplicity of $\psi $ in $\chi _{|U }$ is 1.
	By \cite[Remark 4.10]{UnitaryDisc} we have 
	$e_{\psi } A e_{\psi }  \cong D^{k\times k}$ for some $k\leq n$, so 
	$[D] = [B] = [\psi ] \otimes K $.
\end{proof}

 The two papers \cite{B} and \cite{BS} list important properties of Schur algebras:

\begin{theorem}\label{unidist} 
	Let $D$ be a Schur algebra, $K:=Z(D)$ its center and $m=\ind(D)$ the 
	index of $D$.
	\begin{itemize} 
		\item[(i)] 
$K$  is an abelian number field
containing primitive $m$-th roots of unity (\cite[Theorem]{BS}), so
	$$\zeta _m := \exp(\frac{2\pi i}{m} ) \in K .$$
\item[(ii)] 
	Let $p\in \PP$. Then there is a divisor $m_p$ of $m$ such 
			that $m_P=m_p$ for all 
 finite places $P$ of $K$ over $p$ (\cite[Theorem 1]{B}). 
 \item[(iii)] If $2<p\in \PP $ then $m_p$ of (ii) divides $p-1$ 
	 and $m_2 \in \{1,2 \}$ (\cite[Satz 10, Satz 11]{Witt}).
\item[(iv)]
	 Let $P$ be a prime ideal of $K$
	 and $\sigma \in \Gal (K/\Q )$, $b\in \{1,\ldots , m-1\}$ be such 
	 that $\sigma (\zeta _m) = \zeta _m^b$.
	 Then the Hasse invariants of $D$ at $P$ and $\sigma (P)$ satisfy 
	 $\inv _P(D) = b \inv_{\sigma(P)}(D) .$
\end{itemize} 
\end{theorem}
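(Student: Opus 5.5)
Since the four statements are results quoted from the literature (\cite{BS}, \cite{B}, \cite{Witt}), the plan is not to reprove them from scratch but to show how they fit together. The common source is a Galois-twisting identity for Schur algebras. I would prove (iv) first and then derive (ii) and (i) from it. Part (iii) is a genuinely local computation, for which I would rely on Witt.

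\emph{Step 1: the twisting identity.} Let $[D]=[\chi]$ with $\chi\in\Irr(G)$, and put $N=|G|$. By the Brauer--Witt theorem, $[D]$ is represented by a cyclotomic algebra $(K(\zeta_N)/K,\alpha)$ whose factor set $\alpha$ takes values in roots of unity. Take $\sigma\in\Gal(\Q(\zeta_N)/\Q)$, so $\sigma(\zeta_m)=\zeta_m^b$. There are two facts to combine.
\begin{itemize}
\item[(a)] Transporting the structure by $\sigma$ sends $[\chi]$ to $[\chi^\sigma]$. Local invariants are preserved under this transport: $\inv_{\sigma(P)}(\sigma(D))=\inv_P(D)$.
\item[(b)] Because $\alpha$ is root-of-unity valued, $\sigma$ acts on it by raising values to a power. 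I would check that this gives $[\sigma(D)]=[D]^{b'}$, where $b'$ is an inverse of $b$ modulo $m$.
\end{itemize}
Combining (a) and (b) gives $\inv_P(D)=b'\inv_{\sigma(P)}(D)$, which is equivalent to (iv).

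\emph{Step 2: deducing (ii) and (i).} For (ii), $\Gal(K/\Q)$ acts transitively on the primes of $K$ over $p$. Since $b$ is prime to $m$, multiplication by $b$ preserves the order of elements of $\frac1m\Z/\Z$, so all $m_P$ with $P\mid p$ coincide. For (i), apply Step 1 to $\sigma\in\Gal(K(\zeta_m)/K)$, so that $\sigma(P)=P$. Then $\inv_P(D)=b\inv_P(D)$ for every $P$, that is, $(b-1)\inv_P(D)=0$ everywhere. By the Hasse principle this means $[D]^{b-1}=1$, hence $b\equiv1\pmod m$. So $\sigma$ fixes $\zeta_m$, and $\zeta_m\in K$.

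\emph{Step 3: part (iii).} Here I would reduce to the $p$-adic completion. By Brauer's induction and Theorem~\ref{brauer}, the local index at $P$ is realised by a cyclotomic algebra over $\Q_p(\zeta_N)$ coming from a $p$-elementary or quasi-elementary subgroup. Its index is controlled by the roots of unity in the residue field or, for wild ramification, by the ramification. For $p$ odd this should bound the index by a divisor of $p-1$; for $p=2$ the index is bounded by $2$.

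\emph{Main obstacle.} This local bound is the hard step, and it is exactly Witt's Satz~10 and Satz~11. The other delicate point is the sign and exponent bookkeeping in Step~1(b), namely whether $b$ or $b^{-1}$ appears. I expect (iv) to come out with the exponent as stated once that convention is fixed.
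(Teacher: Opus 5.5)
\textbf{How this relates to the paper.} The paper gives no argument for this theorem. It simply quotes (i) and (iv) from Benard--Schacher, (ii) from Benard, and (iii) from Witt.

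Your Steps~1--2 reconstruct the Benard--Schacher Galois-twisting argument, and that strategy is sound:
\begin{itemize}
\item Deducing (ii) from (iv) and the transitivity of $\Gal(K/\Q)$ on the primes over $p$ is fine.
\item Your proof of (i) is fine. For $\sigma$ fixing $K$ you do not even need local invariants, since then $\sigma(D)=D$ and Step~1(b) gives $[D]=[D]^{b}$ directly.
\item For (iii) you rest on Witt exactly as the paper does. Your Step~3 is a heuristic, not a proof.
\end{itemize}

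\textbf{The error in Step~1(b).} Step~1(b) is wrong as stated, and the sentence after it is false. Normalise $\sigma(D)$ as in (a), so that $\inv_{\sigma(P)}(\sigma(D))=\inv_P(D)$, and put $L=K(\zeta_N)=\Q(\zeta_N)$.
\begin{itemize}
\item The map $x u_\tau\mapsto\sigma(x)u'_\tau$ is a ring isomorphism $(L/K,\alpha)\to(L/K,\sigma\circ\alpha)$ that restricts to $\sigma$ on $K$. In general the index would become $\sigma\tau\sigma^{-1}$; here it stays $\tau$ because $\Gal(L/\Q)$ is abelian.
\item Since $\alpha$ takes values in $\langle\zeta_N\rangle$ and $\sigma(\zeta_N)=\zeta_N^{c}$ with $c\equiv b\pmod m$ (as $m\mid N$), you get $[\sigma(D)]=[D]^{c}=[D]^{b}$, not $[D]^{b'}$.
\item Combined with (a), this gives $\inv_P(D)=b\,\inv_{\sigma(P)}(D)$, which is exactly (iv).
\end{itemize}

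Your formula $\inv_P(D)=b'\,\inv_{\sigma(P)}(D)$ is \emph{not} equivalent to (iv):
\begin{itemize}
\item Your version says $\inv_{\sigma(P)}(D)=b\,\inv_P(D)$.
\item (iv) says $\inv_{\sigma(P)}(D)=b'\,\inv_P(D)$.
\end{itemize}
These disagree whenever $b^2\not\equiv 1$ modulo the local index. For example, take the index-$5$ algebras at $p=11$ from Corollary~\ref{cor2}, with $b=2$ and $\inv_P(D)=\frac15$. Then (iv) gives $\frac35$ at $\sigma(P)$, while your version gives $\frac25$.

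So the exponent bookkeeping you flagged is not a matter of convention. With (a) as you stated it, (b) must read $[\sigma(D)]=[D]^{b}$. Parts (i) and (ii) are unaffected, since they only use that $b$ is a unit modulo $m$.
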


Since the Galois group acts transitively on the prime ideals over $p$, 
Theorem \ref{unidist} (iv) implies that the $p$-local invariants are
fully determined by the Hasse invariant for one prime ideal over $p$. 
Conway polynomials provide a tool to single out a unique prime ideal 
$P:=P(K,p)$ of $K$ that contains $p$ (see \cite[Introduction, 3)]{Braueratlas}). 
Then any Schur algebra $D$ with $Z(D) = K$ is 
uniquely determined by the finite set 
$$\Inv(D) := \{ (\inv _{P(K,p)} ,p) \in \Q/\Z \times \PP  \mid p\in \PP , m_p(D) > 1 \} \cup  I $$ 
where $I=\emptyset $ if no real place of $K$ ramifies in $D$ and 
$I=\{ \infty \} $ otherwise.
We also put $\Inv ([\chi ]) = \Inv(D)$ for irreducible characters $\chi $ with 
$[\chi ] = [D]$.

For many purposes the following coarser invariants suffice:

\begin{definition}\label{Pchi}
	For a finite group $G$, 
	$\chi \in \Irr(G)$, $p\in \PP $,
	and $D$ a division algebra such that $[\chi ] = [D] $ we put
	$$m_p(D) = m_p([D]) = m_p ([\chi ]) = m_p (\chi ) $$ 
	to denote the $p$-local Schur index of $\chi $.
	The set of ramified primes in $[\chi ] $ is denoted by
$${\mathcal P}([\chi ]) := {\mathcal P}(\chi ) := \{ p \in \PP \cup \{ \infty \} \mid m_p(\chi ) > 1 \}$$  and 
	$ P(D):= P(\chi ):=  \prod  \{ p \in {\mathcal P}(\chi ) \mid p\neq \infty \} 
	=\prod \{ p \in \PP \mid m_p(\chi ) > 1 \}.$
\end{definition}

\subsection{Clifford Theory} 

 Clifford Theory deals with the restriction and induction 
 of characters for normal subgroups. 
 Let $G$ be a finite group and $N\unlhd G$ be a normal subgroup of $G$. 
 Then $G$ acts on $\Irr (N)$ by conjugation: 
 For $\psi \in \Irr(N)$ and $g\in G$ put $\psi ^g : N \to \C $, 
 $\psi ^g(n) = \psi (gng^{-1}) $ for all $n\in N$.

 \begin{theorem} (Clifford, see for instance \cite[Theorem (6.5)]{Isaacs})
The restriction of  $\chi \in \Irr (G)$ to the normal subgroup $N$ is of the form
	 $$\chi _{|N} = e (\psi _1 + \ldots + \psi _t ) $$ 
	 where $\psi _i \in \Irr (N)$ and $e\in \N $. 
Moreover $\{ \psi_1,\ldots , \psi_t \} $ is a full orbit 
under $G$. 
Put $\psi := \psi _1$. 
	 Then the stabiliser 
	 $$T:= I_G(\psi) :=  \{ g\in G \mid \psi ^g = \psi  \} $$ 
	 is a subgroup of $G$ of index $t = [G:T]$. 
	 There is a unique $\alpha \in \Irr (T) $ such that 
	 $$\alpha _{|N } = e \psi  \mbox{ and } \chi = \alpha ^G .$$
	 The character $\alpha $ is called the 
	 {\em Clifford correspondent} of $\chi $ over $\psi $.
	 \\
	 For the character fields we get 
	 $$ \Q(\psi) \subseteq  \Q(\alpha )  \mbox{ and } 
	  \Q(\chi) \subseteq  \Q(\alpha ) . $$
 \end{theorem}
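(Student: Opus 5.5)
The statement to prove is the Clifford theorem as formulated above. Its structural part (restriction, orbit, stabiliser, Clifford correspondent) is the standard result quoted from \cite[Theorem (6.5)]{Isaacs}. So I would only recall its proof briefly and concentrate on the two inclusions of character fields at the end.

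For the structural part I would argue in the usual way. Let $\psi$ be an irreducible constituent of $\chi_{|N}$. Since $N\unlhd G$, every conjugate $\psi^g$ is again a constituent of $\chi_{|N}$, with the same multiplicity, because $\chi$ is a class function on $G$. Put $\theta := \sum_{i=1}^t \psi_i$, the sum over the $G$-orbit of $\psi$. Frobenius reciprocity gives that $\chi$ is a constituent of $(\theta)^G$. Moreover, any irreducible constituent of $\chi_{|N}$ is linked to $\psi$ through $\chi$, and Mackey's formula applied to $(\psi^G)_{|N}$ shows it lies in the orbit of $\psi$. Together these give $\chi_{|N} = e(\psi_1+\cdots+\psi_t)$ with $t=[G:T]$, where $T=I_G(\psi)$ by orbit--stabiliser. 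For the correspondent, I would show that induction from $T$ to $G$ is a bijection from the irreducible characters of $T$ lying over $\psi$ to those of $G$ lying over $\psi$. The key point is that for $\alpha$ over $\psi$, Mackey gives $(\alpha^G)_{|N} = \sum_{g\in G/T} (\alpha_{|N})^g$, whose constituents $\psi^g$ are pairwise distinct. Computing $(\alpha^G,\alpha^G)_G=1$ then yields irreducibility and injectivity. Uniqueness of $\alpha$ follows from injectivity, and $\alpha_{|N}=e\psi$ from comparing multiplicities.

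For the field inclusions, $\Q(\chi)\subseteq\Q(\alpha)$ holds because $\chi=\alpha^G$ and the induction formula
$$\chi(g)=\frac{1}{|T|}\sum_{x\in G,\ xgx^{-1}\in T}\alpha(xgx^{-1})$$
expresses each value of $\chi$ as a rational combination of values of $\alpha$. For $\Q(\psi)\subseteq\Q(\alpha)$, take $\sigma\in\Gal(\Q(\alpha,\psi)/\Q(\alpha))$, extended to a suitable cyclotomic field. Then $\alpha^\sigma=\alpha$, so $e\psi^\sigma = (\alpha_{|N})^\sigma = \alpha_{|N}=e\psi$, hence $\psi^\sigma=\psi$. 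Since $\Q(\psi)$ is generated by the values of $\psi$, this proves the inclusion.

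I expect the main obstacle to be the irreducibility and injectivity of induction from $T$, in particular checking that the Mackey double-coset terms $(\alpha^x)_{|T\cap T^x}$ with $x\notin T$ contribute nothing to $(\alpha^G,\alpha^G)_G$. To handle this I would restrict everything to $N$: for $x\notin T$ the constituents of the two sides lie over the distinct characters $\psi$ and $\psi^x$, so those terms are orthogonal.
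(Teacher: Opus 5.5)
Your proof is correct. The paper does not prove this statement: it quotes Clifford's theorem and the Clifford correspondence from Isaacs and records the two field inclusions without comment. The relevant comparison is therefore with the standard textbook proof, which you essentially reproduce.

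\emph{Irreducibility and injectivity.} This is the one genuine difference. You use Mackey to show $(\alpha^G,\beta^G)_G=(\alpha,\beta)_T$ for $\alpha,\beta$ over $\psi$. The terms for double cosets $TxT\neq T$ vanish because the constituents on $T\cap T^x$ lie over $\psi$ on one side and over $\psi^x\neq\psi$ on the other. This is valid and settles both properties at once.

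The textbook route uses a degree count instead. Let $\chi$ be an irreducible constituent of $\alpha^G$ with $\alpha_{|N}=f\psi$. Then $\alpha$ is a constituent of $\chi_{|T}$, so $f\le e$ and
$\chi(1)\le\alpha^G(1)=tf\psi(1)\le te\psi(1)=\chi(1)$.
This forces $\chi=\alpha^G$ and $f=e$ simultaneously and needs no double cosets. Injectivity then follows: such an $\alpha$ exhausts the $\psi$-part of $\chi_{|N}$, so it is the unique constituent of $\chi_{|T}$ over $\psi$.

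\emph{Surjectivity.} You assert this but do not spell it out; it is the same short step on either route. Choose a constituent $\alpha$ of $\chi_{|T}$ lying over $\psi$. By Frobenius reciprocity $\chi$ is a constituent of the irreducible character $\alpha^G$, hence equals it.

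\emph{Field inclusions.} Your Galois argument for $\Q(\psi)\subseteq\Q(\alpha)$ works but is more than needed, since $\psi(n)=\alpha(n)/e$ for all $n\in N$.
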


 \begin{theorem} \label{Gallagher} (Gallagher, see for instance \cite[Corollary (6.17)]{Isaacs} )
	 If $\chi \in \Irr(G)$ is such that $\chi _{|N} =: \psi \in \Irr(N)$, then 
	 the set of irreducible constituents of the induced character 
	 $\psi ^G$ is 
	 $\{ \beta \chi \mid \beta \in \Irr(G/N) \} $.
 \end{theorem}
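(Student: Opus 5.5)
The plan is to prove Gallagher's theorem with Frobenius reciprocity together with the identity $\psi^G = \chi\cdot(1_N)^G$. Here $(1_N)^G$ is the regular character $\rho_{G/N}$ of $G/N$, inflated to $G$.

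\textbf{Step 1 (the tensor identity).} I would show that for any class function $\theta$ of $G$ one has $(\theta_{|N})^G = \theta\cdot (1_N)^G$. This follows from the induction formula
$$\lambda^G(g)=\frac{1}{|N|}\sum_{x\in G}\lambda^\circ(xgx^{-1}).$$
Since $N$ is normal, the summand is nonzero only when $g\in N$, and then $\theta(xgx^{-1})=\theta(g)$. Applying this with $\theta=\chi$ gives
$$\psi^G=\chi\cdot\rho_{G/N}=\sum_{\beta\in\Irr(G/N)}\beta(1)\,\beta\chi .$$
So every irreducible constituent of $\psi^G$ is a constituent of some product $\beta\chi$.

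\textbf{Step 2 (each $\beta\chi$ is irreducible and they are distinct).} Since $\beta$ is constant on $N$-cosets, we have
$$(\beta\chi,\beta'\chi)_G=\frac{1}{|G|}\sum_{g}\beta(g)\overline{\beta'(g)}\,|\chi(g)|^2 .$$
The key input is $(\chi\overline{\chi},\gamma)_G$ for $\gamma\in\Irr(G/N)$, or more directly the product $\chi\overline\chi$ compared with $\rho_{G/N}$. Using Frobenius reciprocity,
$$(\psi^G,\psi^G)_G=(\psi,(\psi^G)_{|N})_N .$$
By Mackey, with $N$ normal and $\psi$ $G$-invariant, $(\psi^G)_{|N}=[G:N]\psi$. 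Hence $(\psi^G,\psi^G)=[G:N]$.

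On the other hand, writing $\psi^G=\sum_\beta \beta(1)\beta\chi$ and expanding $\|\psi^G\|^2$ gives a sum of nonnegative terms. The diagonal part alone contributes at least $\sum_\beta\beta(1)^2\,\|\beta\chi\|^2\ge\sum_\beta\beta(1)^2=[G:N]$. Each $\|\beta\chi\|^2\ge1$ holds because $\beta\chi$ is a nonzero character. Equality forces $\|\beta\chi\|^2=1$, so each $\beta\chi$ is irreducible. It also forces all cross terms $(\beta\chi,\beta'\chi)$ with $\beta\ne\beta'$ to vanish, so the $\beta\chi$ are pairwise distinct.

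\textbf{Step 3 (conclusion).} Finally, every $\beta\chi$ really occurs in $\psi^G$ with multiplicity $\beta(1)>0$. Combined with Step 1, the constituents of $\psi^G$ are therefore exactly $\{\beta\chi\mid\beta\in\Irr(G/N)\}$.

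The main obstacle is the bookkeeping in Step 2. There I must make sure the cross terms are nonnegative integers, which holds since they are inner products of characters, so that the squeeze argument is valid. I also need to justify $(\psi^G)_{|N}=[G:N]\psi$, which follows since $(\psi^G)_{|N}=\sum_{gN}\psi^g$ and $\psi^g=\psi$ because $\psi$ extends to $G$.
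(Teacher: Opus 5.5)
Your proof is correct: the paper gives no proof of its own and only cites Isaacs, Corollary (6.17), and your route is essentially the standard argument given there. That argument is the identity $(\chi_{|N})^G=\chi\cdot(1_N)^G=\sum_{\beta}\beta(1)\,\beta\chi$, followed by the squeeze $[G:N]=(\psi^G,\psi^G)_G\ge\sum_\beta\beta(1)^2\|\beta\chi\|^2\ge\sum_\beta\beta(1)^2=[G:N]$. The aside about $(\chi\overline{\chi},\gamma)_G$ and the displayed formula for $(\beta\chi,\beta'\chi)_G$ at the start of Step 2 are never used, so you can simply drop them.
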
 

\subsection{$p'$-special characters of solvable groups} 

In the proof of the main result we use  $p'$-special characters. 
For a finite group $G$ and a character $\chi \in \Irr(G)$ 
we define the determinantal order $o(\chi )$ to be 
$$| \langle \det(\rho (g) ) \mid g\in G \rangle | $$ 
the order of the subgroup of the multiplicative group 
$\Q (\chi )^{\times }$ generated by 
the determinants of the images of the group elements under any complex  
representation $\rho $ affording the character $\chi $. 

\begin{definition}
 Let $p$ be a prime and let $G$ be a solvable group.
	An irreducible character $\chi \in \Irr(G)$ is called {\em $p'$-special} if
	 \begin{itemize}
		 \item[(i)] $p$ does not divide $\chi(1)$, and
		 \item[(ii)] for every subnormal subgroup $S $ of $G$ and every irreducible constituent $\theta \in \Irr(S)$ of $\chi_{\mid S}$, the determinantal order $o(\theta)$ is prime to $p$.
	 \end{itemize}
\end{definition}

From {\cite[Lemma 3.3]{Isaacspaper} we obtain 
 \begin{lemma}\label{pspeciallemma}
 Let $G$ be solvable, $N \unlhd G$, and let $\theta \in \Irr(N)$ be $p'$-special.
	 \begin{itemize}
		 \item[(i)] If $G/N$ is a $p'$-group, then every irreducible constituent of $\theta^G$ is $p'$-special.
		 \item[(ii)] If $G/N$ is a $p$-group and $\theta$ is $G$-invariant, then $\theta$ has a unique extension $\hat{\theta} \in \Irr(G)$ whose determinantal order $o(\hat{\theta})$ is prime to $p$. Moreover this extension $\hat{\theta}$ is $p'$-special.
	 \end{itemize}
 \end{lemma}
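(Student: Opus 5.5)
We prove the two parts separately, in both cases reducing the subnormal condition to a statement about linear characters (determinants). Throughout, for a character $\varphi$ of a group $S$ we write $\det\varphi$ for the linear character $s\mapsto\det(\rho(s))$. Then $o(\varphi)$ is the order of $\det\varphi$ in the group of linear characters of $S$. We use two standard facts from \cite{Isaacs}: character degrees of solvable groups behave well under induction (Corollary (11.29)), and the extension theorem for invariant characters of coprime degree and determinantal order (Corollary (6.28) / Theorem (8.16)).

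For (i), let $\chi$ be an irreducible constituent of $\theta^G$ with $G/N$ a $p'$-group. By Clifford theory $\chi_{|N}$ is a multiple of a sum of $G$-conjugates of $\theta$, and $\chi(1)/\theta(1)$ divides $[G:N]$. Hence $p\nmid\chi(1)$. Now let $S$ be subnormal in $G$ and $\varphi$ a constituent of $\chi_{|S}$. Then $S\cap N$ is subnormal in $N$, and every constituent $\eta$ of $\varphi_{|S\cap N}$ is a constituent of $(\theta^g)_{|S\cap N}$ for some $g$. Since $\theta^g$ is again $p'$-special, $o(\eta)$ is prime to $p$. Restricted to $S\cap N$, $\det\varphi$ is a product of determinants of $S$-conjugates of $\eta$, so it has $p'$-order $m$. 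Then $(\det\varphi)^m$ is a linear character of $S/(S\cap N)$. This group embeds in $G/N$, so it is a $p'$-group, and hence $o(\varphi)$ is prime to $p$.

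For (ii), since $p\nmid\theta(1)o(\theta)$ and $[G:N]$ is a power of $p$, Isaacs' extension theorem gives an extension $\hat\theta$ of $\theta$. Multiplying $\hat\theta$ by linear characters of the $p$-group $G/N$ changes only the $p$-part of $\det\hat\theta$. Since $\gcd(\theta(1),p)=1$, there is exactly one choice for which $\det\hat\theta$ has $p'$-order, which gives existence and uniqueness. Clearly $\hat\theta(1)=\theta(1)$ is prime to $p$.

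It remains to check condition (ii) of $p'$-speciality for $\hat\theta$. I expect this to be the main obstacle. Let $S$ be subnormal, $\varphi$ a constituent of $\hat\theta_{|S}$, and write $\det\varphi=\lambda\mu$ with $\lambda$ of $p$-power order and $\mu$ of $p'$-order. As in (i), $\lambda$ is trivial on $S\cap N$, so $\lambda$ is a character of the $p$-group $S/(S\cap N)$, and we must show $\lambda=1$.

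My first attempt is an induction along a subnormal series $S=S_0\unlhd S_1\unlhd\cdots\unlhd S_r=G$, which reduces to $S\unlhd G$. In that case $\hat\theta_{|S}=f(\varphi_1+\dots+\varphi_t)$ with $ft\varphi(1)=\hat\theta(1)$ prime to $p$. Taking $p$-parts of determinants,
$$1=\bigl((\det\hat\theta)_{|S}\bigr)_p=\prod_{i=1}^t\lambda_i^{f\varphi(1)},$$
where $\lambda_i$ is the $p$-part of $\det\varphi_i$, and the $\lambda_i$ are $G$-conjugates of $\lambda$. Since $[N,S]\le N\cap S$, conjugation by $N$ fixes each $\lambda_i$, so only $G/N$ acts, and the conjugates of $\lambda$ come in orbits of a $p$-group. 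The delicate point is to conclude that all $\lambda_i$ are equal. I would try to obtain this by applying Clifford theory to $SN$, where $\hat\theta_{|SN}$ is still an extension of $\theta$ of $p'$-determinantal order, and replacing $G$ by $SN$. In $SN$ the conjugating group $SN/N\cong S/(S\cap N)$ acts trivially on characters of $S$ modulo inner conjugation. Then $\lambda^{tf\varphi(1)}=1$ with $tf\varphi(1)$ prime to $p$, which forces $\lambda=1$.
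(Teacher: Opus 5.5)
\textbf{There is a gap in the subnormal case of (ii); the rest of your argument is correct.}

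The paper does not prove this lemma. It quotes it from Isaacs (Lemma 3.3 of the cited paper, going back to Gajendragadkar), so your argument has to stand on its own.

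\textbf{What works.} Part (i) is correct. Existence and uniqueness in (ii) are also correct: they follow from the extension theorem plus Gallagher, since $\beta\mapsto\beta^{\theta(1)}$ is a bijection on the $p$-group of linear characters of $G/N$, and the $p$-part of $\det\hat\theta$ is trivial on $N$.

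Your hesitant treatment of the case $S\unlhd G$ actually works. Since $\hat\theta_{SN}$ is irreducible, the constituents of $\hat\theta_S$ form one $SN$-orbit. Because $S$ fixes $\Irr(S)$, they are in fact $N$-conjugate. Moreover $N$ fixes $\lambda$, because $[N,S]\le S\cap N\le\ker\lambda$. Hence all $\lambda_i=\lambda$, so $\lambda^{tf}=1$ with $tf\mid\theta(1)$, which forces $\lambda=1$. One small slip: the exponent in your display should be $f$, not $f\varphi(1)$, since the determinant of $f(\varphi_1+\cdots+\varphi_t)$ is $\prod_i(\det\varphi_i)^f$. This is harmless.

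\textbf{The gap.} You assert that induction along a subnormal series reduces everything to $S\unlhd G$, and this is not automatic. Suppose $S$ is subnormal in a proper normal subgroup $M$ of $G$. Then $\varphi$ lies under some constituent $\psi$ of $\hat\theta_M$. To rerun your argument inside $M$, you need $\psi$ to be in the situation of (ii) again. A priori, $\psi$ is only a constituent of a possibly reducible restriction.

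\textbf{How to close it.} Let $\eta$ be a constituent of $\psi_{M\cap N}$.
\begin{itemize}
\item[(1)] $\psi(1)$ divides $\hat\theta(1)$, so it is prime to $p$. On the other hand, $\psi(1)/\eta(1)$ divides $[M:M\cap N]$, a power of $p$ (Isaacs, Corollary (11.29)). Hence $\psi_{M\cap N}=\eta$ is irreducible and $M$-invariant.
\item[(2)] $\eta$ is $p'$-special, because $M\cap N\unlhd N$ and the defining conditions pass to constituents on normal subgroups.
\item[(3)] $o(\psi)$ is prime to $p$, by your normal case.
\end{itemize}
Thus $\psi$ is the canonical extension of $\eta$ to $M$, and $M/(M\cap N)\cong MN/N$ is a $p$-group.

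Now take (ii) itself as the induction hypothesis on $|G|$. Applied to $(M,M\cap N,\eta)$, it shows that $\psi$ is $p'$-special. Since $S$ is subnormal in $M$, the constituent $\varphi$ of $\psi_S$ then has $o(\varphi)$ prime to $p$. The case $S=G$ is covered because $o(\hat\theta)$ is prime to $p$. With this inductive step made explicit, your proof is complete.
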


 We use the following property of $p'$-special characters: 

\begin{lemma} \label{SchurIndex1} ({\cite[Proposition 6.3 (b)]{GAJENDRAGADKAR1979237}})
	Let $\chi \in \Irr (G)$ be $p'$-special and let $\chi' $ denote the 
restriction of $\chi $ to the $p$-regular elements. 
	Then $\chi' $ is an irreducible Brauer character of $G$. 
\end{lemma}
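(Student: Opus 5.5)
We would prove the statement by induction on $|G|$, using the fact that $G$ is solvable (all $p'$-special characters in this paper live on solvable groups). To make the induction go through, we strengthen the claim to
\begin{itemize}
\item[(a)] if $\chi\in\Irr(G)$ is $p'$-special, then $\chi'$ is an irreducible Brauer character;
\item[(b)] if $\chi_1,\chi_2\in\Irr(G)$ are $p'$-special with $\chi_1'=\chi_2'$, then $\chi_1=\chi_2$.
\end{itemize}
For $G=1$ there is nothing to prove. Otherwise choose $N\unlhd G$ with $G/N$ of prime order $q$. Let $\theta$ be an irreducible constituent of $\chi_{|N}$. Then $\theta$ is again $p'$-special: $\theta(1)$ divides $\chi(1)$, and every subnormal subgroup of $N$ is subnormal in $G$. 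By induction $\theta'$ is irreducible, and (b) holds for $N$.

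\emph{Case $q=p$.} Since $p\nmid\chi(1)$ and $[G:I_G(\theta)]$ is a power of $p$ dividing $\chi(1)$, the character $\theta$ is $G$-invariant. Hence $\chi_{|N}=e\theta$ with $e\mid[G:N]=p$, and $e$ divides $\chi(1)$, so $e=1$. Thus $\chi'$ restricts on $N$ to the irreducible Brauer character $\theta'$, and so $\chi'$ is irreducible, proving (a).

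For (b), let $\chi_1,\chi_2$ be $p'$-special with $\chi_1'=\chi_2'$. Their restrictions $\theta_1,\theta_2$ to $N$ are irreducible, $p'$-special, and satisfy $\theta_1'=\theta_2'$, so $\theta_1=\theta_2$ by induction. Now $\chi_1$ and $\chi_2$ are both extensions of this character with determinantal order prime to $p$: their constituent on the subnormal subgroup $G$ is themselves, so condition (ii) of the definition applies. By the uniqueness statement in Lemma \ref{pspeciallemma}(ii), $\chi_1=\chi_2$.

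\emph{Case $q\neq p$.} If $\theta$ is $G$-invariant, then $\chi_{|N}=\theta$ because $G/N$ is cyclic, and (a) follows exactly as above. Otherwise $I_G(\theta)=N$ and $\chi=\theta^G$, so $\chi'=(\theta')^G$ and $\chi_{|N}=\sum_{g\in G/N}\theta^g$. The conjugates $\theta^g$ are pairwise distinct and $p'$-special, so by (b) for $N$ the Brauer characters $(\theta^g)'=(\theta')^g$ are pairwise distinct. Hence the inertia group of $\theta'$ in $G$ is $N$. Clifford theory for Brauer characters (the Green/Clifford correspondence over $N$) then shows that $(\theta')^G$ is irreducible, which is (a).

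For (b) in this case, let $\chi_1,\chi_2$ be $p'$-special with $\chi_1'=\chi_2'$. Restricting to $N$, the multisets of $p'$-special constituents have equal Brauer restrictions, so by (b) for $N$ the two characters lie over the same $G$-orbit in $\Irr(N)$. If that orbit is non-trivial, then $\chi_1=\theta^G=\chi_2$ by Clifford's theorem. If it is trivial, then $\chi_2=\beta\chi_1$ for some linear $\beta\in\Irr(G/N)$ by Gallagher's Theorem \ref{Gallagher}. Since $G/N$ is a $p'$-group, $\beta$ is $p$-regular on $G$ modulo $N$. From $\beta'\chi_1'=\chi_1'$ and $\chi_1'(1)\ne0$ together with the irreducibility of $\chi_1'$ (as in Gallagher's theorem for Brauer characters, the extensions $\beta'\chi_1'$ are distinct for distinct $\beta$ when $q\neq p$), we get $\beta=1$.

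The main obstacle is the injectivity statement (b). It is exactly what rules out different $G$-conjugates of $\theta$ collapsing to one Brauer character. It is also what distinguishes the Gallagher twists modulo $p$, where one has to use carefully that $\beta\mapsto\beta'$ is injective on $\Irr(G/N)$ when $G/N$ is a $p'$-group. If the Gallagher-twist step in the case $q\neq p$ resists, we would instead reorganise the induction along a normal series adapted to Lemma \ref{pspeciallemma}, with $p$-quotients handled by part (ii) and $p'$-quotients by part (i), as Gajendragadkar does.
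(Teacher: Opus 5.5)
Your proof is correct, but it follows a different route from the one behind the citation. The paper gives no proof of its own and quotes Gajendragadkar. There, this statement follows from the behaviour of $p'$-special characters on a Hall $p'$-subgroup $H$ of $G$: for $p'$-special $\chi$ one has $\chi_{|H}\in\Irr(H)$, and distinct $p'$-special characters have distinct restrictions to $H$. Since $H$ consists of $p$-regular elements, and Brauer characters of the $p'$-group $H$ are ordinary characters, a decomposition $\chi'=\varphi_1+\varphi_2$ would restrict to a decomposition of $\chi_{|H}$. So $\chi'$ is irreducible. The injectivity on $H$ gives your statement (b) in the same way, and that is exactly what the paper invokes later in the proof of Lemma \ref{oddzeta}.

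Those Hall-subgroup facts are proved by an induction of much the same shape as yours: a normal subgroup of prime index, Clifford correspondence in the non-invariant case, Gallagher twists in the invariant case, and injectivity carried along so that inertia groups are controlled. However, that induction uses only ordinary character theory of $H$, and it yields the stronger conclusion $\chi_{|H}\in\Irr(H)$. Your version stays inside $G$ and obtains (a) and (b) directly on $p$-regular elements. The cost is reliance on modular Clifford theory: the fact that $\varphi^G$ is irreducible when $\varphi\in\mathrm{IBr}(N)$ has inertia group $N$, and Gallagher's theorem for Brauer characters.

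In that last step, the remark $\chi_1'(1)\neq 0$ plays no role. What you actually use is that $\gamma\mapsto\gamma\chi_1'$ is injective on $\mathrm{IBr}(G/N)$ when $\chi_1'$ restricts irreducibly to $N$. You also need that $\beta\mapsto\beta'$ is injective on $\Irr(G/N)$. This holds because $|G/N|=q\neq p$ forces every coset of $N$ to contain a $p$-regular element. It would be cleaner to state these two facts explicitly.
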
 

\begin{remark} \label{remSchurIndex1}
	Let $\varphi $ be an irreducible Brauer character of $G$, $\chi \in \Irr(G)$ 
	and $d_{\chi ,\varphi }$ denote the decomposition number, i.e. the
	multiplicity of $\varphi $ in the restriction of $\chi $ to 
	the $p$-regular elements of $G$. 
	Then 
                        \[
				m_p(\chi) \bigm| d_{\chi ,\varphi} [\Q_p(\chi, \varphi ) : \Q_p(\chi)] 
                        \]
			(see \cite[Theorem 4.5(1)]{Unger}). 
			In particular 
			Lemma \ref{SchurIndex1} implies that 
			for any $p'$-special character  $\chi $
		 the $p$-local Schur index $m_p(\chi )=1$.
\end{remark}

\subsection{Character degrees for solvable groups}

 For an abelian number field $K$, 
 the  {\em conductor} $f(K)$ is the 
 smallest natural number $n$ such that $K$ is contained in the $n$-th cyclotomic field,
 $K\subseteq \Q[\zeta _{n}]$.
 We also put $F(K) := \Q[\zeta _{f(K)}] $ to denote the conductor field.

 \begin{theorem} \cite{Cram} \label{Cram} 
	 Let $\chi $ be an irreducible character of some finite solvable group $G$. 
	 Then $[F(\Q(\chi)) : \Q(\chi )] $ divides $\chi (1) $.
 \end{theorem}

\section{Schur indices of induced representations} 

This section gives a generalization of \cite[Theorem 2]{Yamadamon}. 
It is useful to apply the criterion prime per prime so we will work over 
local fields and consider the following situation: 

$G$ is a finite group, $N\unlhd G$ a normal subgroup, $\alpha \in \Irr (N)$ such 
that $\alpha ^G = \chi \in \Irr (G) $. 
Then $\chi (1) = [G:N] \alpha(1)$ and the restriction  is 
$$\chi _{|N} = \sum _{gN \in G/N} \alpha ^g .$$

Let $p$ be a rational prime.
Then the $p$-adic character fields, $E:=\Q_p(\alpha )$ and $K:=\Q_p(\chi )$,
satisfy $K\leq E$. 

\begin{theorem} \label{Yamadagen}  
	If $E/K$ is unramified and $m_p(\alpha ) = 1$ then $m_p(\chi ) $ divides 
	$\gcd (\alpha(1),[E:K]) $.
\end{theorem}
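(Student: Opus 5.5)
The plan is to prove the two divisibilities $m_p(\chi)\mid \alpha(1)$ and $m_p(\chi)\mid [E:K]$ separately, working throughout with the local character fields $K=\Q_p(\chi)\le E=\Q_p(\alpha)$. Each divisibility comes from a different description of the simple component of $K G$ attached to $\chi$.

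For the bound by $[E:K]$ I would use Theorem \ref{brauer} locally: $\alpha$ occurs in $\chi_{|N}$, and since the conjugates $\alpha^g$ are pairwise distinct ($\alpha^G$ is irreducible), the multiplicity is $e=1$. Hence for every prime $\ell\nmid [E:K]$ the $\ell$-part of $m_p(\chi)$ equals the $\ell$-part of the index of $[\alpha]\otimes E$, which is $m_p(\alpha)=1$. So $m_p(\chi)$ is only supported at primes dividing $[E:K]$. To get the full divisibility $m_p(\chi)\mid[E:K]$ I would use the standard fact that the local index of $\chi$ divides the index of $[\alpha]\otimes_{\Q_p(\alpha)}$ multiplied by $[\Q_p(\alpha):\Q_p(\chi)]$. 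This is the analogue of Remark \ref{remSchurIndex1} with $d=e=1$: restricting a $K$-representation realizing $\chi$ to $N$ and taking the isotypic component belonging to $\alpha$ shows that $D_\chi\otimes_K E$ is split whenever $[\alpha]$ is split over $E$. Since a division algebra over a local field split by an extension of degree $[E:K]$ has index dividing $[E:K]$, this gives the claim.

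For the bound by $\alpha(1)$, and the place where unramifiedness of $E/K$ is needed, I would use a crossed product argument. Let $H\le G$ be the stabiliser of the $\Gal(E/K)$-orbit of $\alpha$, that is, the set of $g$ with $\alpha^g=\sigma(\alpha)$ for some $\sigma\in\Gal(E/K)$. Then $H/N\to\Gal(E/K)$ is surjective with kernel the inertia subgroup, which is $N$ itself. The $K$-simple component of $KH$ containing the Galois conjugates of $\alpha$ is a generalised crossed product: it is built from $A=E$-simple algebra of $\alpha$, which is a matrix ring $E^{\alpha(1)\times\alpha(1)}$ because $m_p(\alpha)=1$, together with the cyclic Galois group $\Gal(E/K)$ (cyclic because $E/K$ is unramified). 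Its class in $\Br(K)$ is then a cyclic algebra $(E/K,\sigma,c)$, where $c\in E^\times$ is determined up to norms. The constant $c$ arises from a lifting $h$ of the Frobenius via $h^{[E:K]}\in N$ acting on $E^{\alpha(1)\times\alpha(1)}$, so $c$ is essentially a determinant-type scalar. In the unramified case the invariant is $v_E(c)/[E:K]$. I would then argue that $c$ can be chosen with $c^{\alpha(1)}$ a norm, for example by taking reduced norms (determinants) of the $\alpha(1)\times\alpha(1)$ matrix $\delta(h)^{[E:K]}$ and composing with a unit. This shows the invariant is killed by $\alpha(1)$. Finally, $\chi$ is induced from the corresponding character of $H$ with multiplicity one and the same field $K$, so Corollary \ref{brauer1} transfers the class from $H$ to $G$.

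I expect the main obstacle to be this last step: controlling the cocycle $c$ so that the valuation argument yields divisibility by $\alpha(1)$. My first attempt would be the determinant trick, namely that the Skolem--Noether conjugating element, raised to the power $[E:K]$, lies in $E^{\alpha(1)\times\alpha(1)}$ up to a scalar, and that taking the reduced norm multiplies the valuation by $\alpha(1)$. If the cocycle bookkeeping fails, I would fall back on the generalised crossed product theory referenced in \cite{Schlang}.
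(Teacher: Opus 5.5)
Your outline follows the paper's proof. It uses the same subgroup $H$ with $H/N\cong\Gal(E/K)$, the transfer $m_p(\chi)=m_p(\alpha^H)$ via Corollary \ref{brauer1}, the bound by $[E:K]$ because $E$ splits the class, and for the bound by $\alpha(1)$ the identification of the component as $(E/K,\sigma,\lambda)^{d\times d}$ followed by showing that $\lambda^{d}$ is a norm. Your multiplicity-one argument on $\chi_{|N}$ over $E$ works directly for $\chi$, and it makes the appeal to Theorem \ref{brauer} redundant.

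The gap is in the $\alpha(1)$-bound: the one step that carries its content is not supplied, and as you state it, it does not work. Saying that the reduced norm of the scalar $\lambda I_d$ is $\lambda^d$, so that ``the valuation is multiplied by $\alpha(1)$'', is a tautology. It gives no reason why $t=[E:K]$ divides $d\,v_K(\lambda)$, which is exactly what is needed. Also, $\delta(h)^{[E:K]}$ is the wrong matrix. No $d\times d$ matrix attached to $h$ extends $\rho$, since $h$ moves $\alpha$ to $\sigma(\alpha)$. In the induced representation, $\rho^H(h)^t=\rho^H(h^t)$ is block diagonal of finite order and contributes only a root of unity, not the scalar $\lambda$.

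The missing idea is to compute $\lambda$ as a twisted product. Choose $T\in\GL_d(E)$ with $\sigma(\rho(n))=T\rho(hnh^{-1})T^{-1}$ for all $n\in N$. Then $\tilde\sigma:=\diag(T,\sigma(T),\ldots,\sigma^{t-1}(T))\,\rho^H(h)$ induces $\sigma$ on $A\cong E^{d\times d}$. Its power $\tilde\sigma^t$ is the scalar $\lambda$ with $\lambda I_d=\sigma^{t-1}(T)\cdots\sigma(T)\,T\,\rho(h^t)$. Since $\tilde\sigma$ commutes with $\tilde\sigma^t$, $\lambda$ is $\sigma$-fixed, so $\lambda\in K^\times$ (not merely $E^\times$, as you wrote).

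Taking determinants gives $\lambda^d=N_{E/K}(\det T)\,\det\rho(h^t)$. Here $\det\rho(h^t)$ is a root of unity lying in $K$, hence a unit of ${\mathcal O}_K$, hence a norm from the unramified extension $E$. So $\lambda^d\in N_{E/K}(E^\times)$, equivalently $v_K(\lambda^d)=t\,v_E(\det T)$, and $[B]^d=[(E/K,\sigma,\lambda^d)]$ is trivial. With this identity inserted, your plan is exactly the paper's argument and needs no fallback to generalised crossed products. Without it, $m_p(\chi)\mid\alpha(1)$ is asserted rather than proved.
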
 

\begin{proof}
	(a) 
Similarly as in \cite{Yamadamon} we first construct a subgroup 
	$H\leq G$ such that $H/N \cong \Gal(E/K)$: 
	$$H:=\{ g\in G \mid \exists \sigma \in \Gal(E/K ) : \alpha (gng^{-1}) = \sigma(\alpha (n) ) \mbox{ for all } n\in N \} .$$
	(b)
	For the induced character $\psi = \alpha ^H $ 
	we then get $\Q_p(\psi ) = \Q_p(\chi) $ and $\chi = \psi ^G$ and hence 
	$m_p(\psi ) = m_p(\chi )$ by Corollary \ref{brauer1}.
	\\
	(c) 
	The fact that 
	$m_p(\alpha ) = 1$ implies that $E$ splits $[\psi ]$ and 
	$m_p(\psi )$ divides $[E:K]$. 
	\\
	(d) So it is enough to show that $m_p(\psi ) \mbox{ divides } \alpha(1)$: 
	\\
	Put $d:=\alpha (1)$ and $t:=[E:K] = |H/N |$.
	\\
	If $t=1$, then $H=N$, $\psi = \alpha $ and (d) follows from Remark \ref{mdivdeg}.
	So assume that $t\geq 2$.\\
	Since $E/K$ is unramified, its Galois group is cyclic, 
	$\Gal (E/K) = \langle \sigma \rangle .$ \\
	Choose $h\in H$ with $\alpha (hnh^{-1}) = \sigma(\alpha (n) ) \mbox{ for all } n\in N$ and let $\rho : N \to \GL_{d}(E)$ be a representation affording the character $\alpha $. 
	By abuse of notation, we apply elements of $\Gal(E/K)$ also to
	matrices over $E$, by applying the Galois automorphisms to all entries 
	of the matrix. 
	The representations $\rho ^{\sigma } $ and $\rho ^h $ with 
	$$\rho^{\sigma }(n) = \sigma (\rho (n)) \mbox{ and } 
	\rho ^h (n ) = \rho(hnh^{-1}) \mbox{ for all } n\in N$$
	 have the same character and hence are equivalent. 
	So there is a matrix $T\in \GL_d(E)$ such that 
	$$  \sigma (\rho (n) )  = T \rho (h n h^{-1}) T^{-1} \mbox{ for all } n\in N .$$
 For $0 \leq j \leq t$ put
                    $X_j := \sigma^j(T) \cdots \sigma(T)T $. 
		    Inductively we get
		    $$
                    \sigma^j(\rho(n)) = X_{j-1} \rho(h^j n h^{-j}) X_{j-1}^{-1} \text{ for all } n \in N, 1 \leq j \leq t.
		    $$
The induced representation $\rho^H: H \to \GL_{dt}(E)$ can be  defined by putting
                \[
			\rho^H(n) := \diag (\rho(n), \sigma(\rho(n)), \ldots, \sigma^{t-1}(\rho(n))) \text{ for all } n \in N
                \]
                and
                \[
                    \tilde{h} := \rho^H(h) = \begin{pmatrix}0 & T^{-1} & 0 & \cdots & 0 \\ 0 & 0 & \sigma(T)^{-1} & \ddots & \vdots \\ \vdots & \ddots & \ddots & \ddots & \vdots \\ 0 & \cdots & \cdots & 0 & \sigma^{t-2}(T)^{-1} \\ X_{t-2}\rho(h^t) & 0 & \cdots & \cdots & 0 \end{pmatrix}.
                \]
                As both $\alpha$ and $\psi$ are irreducible characters, the $K$-algebra $B:= \langle \rho^H(g) \mid g \in H \rangle_K$ is a central simple algebra containing the subalgebra $A:= \langle \rho^H(n) \mid n \in N \rangle_K$.
                Since $m_p(\alpha) = 1$, we have $A \cong E^{d \times d}$, by our choice of basis 
                \[
			A = \{\diag (M, \sigma(M), \ldots, \sigma^{t-1}(M)) \mid M \in E^{d \times d}\}.
                \]
                Conjugation with the matrix
                \[
			\tilde{\sigma} := \diag (T, \sigma(T), \ldots, \sigma^{t-1}(T)) \tilde{h} = 
		\left( \begin{array}{ccccc} 
		0 & I_d & 0 & \cdots & 0 \\ 0 & 0 & I_d & \ddots & \vdots \\ \vdots & \ddots & \ddots & \ddots & \vdots \\ 0 & \cdots & \cdots & 0 & I_d \\ X_{t-1}\rho(h^t) & 0 & \cdots & \cdots & 0 \end{array} \right) \in B
                \]
                induces the Galois automorphism $\sigma$ on $A$.
	In particular $\tilde{\sigma }^t \in A$ commutes with all elements in $A$,
	so is a scalar matrix 
	$$ \tilde{\sigma }^t = X _{t-1} \rho(h^t) \otimes I_t = \lambda I_{dt} $$ 
	for some $\lambda \in E^{\times }$. Then $\lambda \in K^{\times }$ because
	$\sigma(\lambda ) = \lambda $.
	As  $A\cong E^{d \times d} $ this establishes an 
	isomorphism between $B$ and the matrix ring 
	$(E/K,\sigma , \lambda ) ^{d\times d}$ 
	of the cyclic algebra $(E/K,\sigma , \lambda )$ (see \cite[Section 30]{Reiner}). 
	In particular 
	$$[B] = [(E/K,\sigma , \lambda )] \in \Br(K) $$ and hence 
	$$[B]^d = [(E/K,\sigma , \lambda^d )] \in \Br(K) .$$
	Now 
	$$\lambda ^d = \det (\lambda I_d) = \det (X _{t-1} \rho(h^t)  ) = N_{E/K}(\det(T)) \det(\rho(h^t)) .$$
	Let ${\mathcal O}_E$ resp. ${\mathcal O}_K$ denote the valuation rings of $E$ resp. $K$. 
As $E/K$ is unramified the norm is surjective on units, 
$N_{E/K}({\mathcal O}_E^{\times }) = {\mathcal O}_K^{\times }$. The determinant of $\rho(h^t)$ is an 
element of finite order in $K^{\times }$, so $\det(\rho(h^{t})) \in {\mathcal O}_K^{\times }$,
which shows that $\lambda ^d \in N_{E/K} (E^{\times })$. 
Therefore $[B]^d = [(E/K,\sigma , \lambda^d )] = [K] \in \Br(K)$, so the 
order $m_p(\psi )$ of $[B]$ in the Brauer group of $K$ divides $d = \alpha (1)$.
\end{proof}

\section{The degree of a Schur algebra}

Let ${\mathcal G}$ respectively ${\mathcal G}_{\sol}$ denote the category of all 
finite groups, respectively the one of all finite solvable groups.
Let $K$ be a number field.

 \begin{definition} 
	 For a central simple $K$-division algebra $D$ we put 
	 $$\deg(D) := n \mbox{ respectively } \deg_{\sol}(D) := n$$ if 
	 $n\Z $ is the ideal generated by 
	 $$\{ t \mid \exists G\in {\mathcal G} 
	 \mbox{ (resp. } {\mathcal G}_{\sol} \mbox{) such that } 
	 D^{t\times t} \mbox{ is a direct summand of } \Q G \} . $$
	 If $[D] \not\in S(K)$ then we put $\deg(D) = \deg_{\sol}(D) = \infty $.
 \end{definition} 

 \begin{remark}
	 Clearly the Galois group $\Gal(K/\Q) $ of $K$ acts on 
	 the Schur subgroup $S(K)$. Schur algebras in the same Galois 
	 orbit have the same degree. 
 \end{remark}

 \subsection{Some definite quaternion algebras of degree 1} 
 
 If the center of a Schur algebra $D$ is a totally real field, $Z(D) = K$, then
either $D=K$ or $D$ is a quaternion algebra.
In the latter case, there are
$a,b \in K$ such that
$$D = (a,b) _K = \langle 1,i,j,ij \mid i^2=a, j^2=b, ij=-ji \rangle .$$
Then $D$ is called a {\em definite} quaternion algebra, if $\infty  \in {\Inv}(D) $, i.e. $D\otimes _K \R $ is a division algebra
(for one and hence all real embeddings of $K$ into $\R $).

 The definite quaternion algebras $D$ 
 that are direct summands of $\Q G$ for some finite group $G$
 are classified in \cite[Theorem 6.1]{quat} (see also \cite{Amitsur}):

 \begin{theorem} \label{dim1}
         Assume that $K$ is a real number field and let $D$ be a
         definite quaternion algebra with center $K$ such that $D$ is
         a direct summand of $\Q G$ for some finite group $G$.
         Then one of the following holds:
         \begin{itemize}
                 \item
         For $K=\Q $ there are two such quaternion division algebras:
			 $D = (-1,-1)_{\Q}  $ (with $\Inv(D) = \{ (\frac{1}{2} ,2), \infty \}$)
			 and
			 $D = (-1,-3)_{\Q}$ (with $\Inv(D) = \{ (\frac{1}{2} ,3),\infty \} $).
 \item
         For $[K:\Q ] =2 $ there are three such quaternion division algebras,
			 the ones where $K= \Q[\sqrt{p}]$ for $p=2,3,5$ and $\Inv(D) = \{\infty \}$.
 \item
         If $[K:\Q ] \geq 3$ then $F(K) =\Q[\zeta _{m}]$ for some even natural number 
			 $m$,
                 $K$ is the maximal real subfield of $F(K)$, i.e. $[F(K):K] = 2$,
                 and $G=C_m.C_2 = Q_{2m} $ is the quaternion group of
                 order $2m$.  
			 $$\Inv(D) = 
			 \left\{ \begin{array}{ll} 
				 \{ \infty \}  & \mbox{ if } [K:\Q ] \mbox{ even } \\
				 \{ (\frac{1}{2} ,p), \infty \} & \mbox{ if } m = 2p^a \mbox{ for a prime } p\equiv 3 \pmod{4}.
			 \end{array} \right.  $$
         \end{itemize}
 \end{theorem}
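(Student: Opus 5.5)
The plan is to reduce the statement to the classification of finite subgroups of the real quaternions, and then to compute the quaternion algebra spanned by each such group.

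First, assume that $D = D^{1\times 1}$ is a direct summand of $\Q G$. The projection $\Q G \to D$ then gives a group homomorphism $G \to D^{\times}$. Its image $\bar G$ is a finite subgroup of $D^{\times}$ whose $\Q$-span is all of $D$, and $K = Z(D)$ is generated by the reduced traces of the elements of $\bar G$. Replacing $G$ by $\bar G$, we may assume that $G \leq D^{\times}$ spans $D$. Fix a real embedding of $K$. Since $D$ is definite, $D\otimes_K \R$ is Hamilton's quaternions $\mathbb{H}$, so $G$ embeds in $\mathbb{H}^{\times}$. By averaging a norm form, $G$ lies in the unit quaternions, i.e. $G \leq SU(2)$. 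The classical list of finite subgroups of $SU(2)$ is:
\begin{itemize}
\item cyclic groups;
\item generalised quaternion (binary dihedral) groups $Q_{2m}=C_m.C_2$ with $m$ even and $m\ge 4$;
\item $SL(2,3)$;
\item the binary octahedral group of order $48$;
\item the binary icosahedral group $SL(2,5)$.
\end{itemize}
Cyclic groups span commutative algebras and are excluded. What remains is to compute, for each of the other groups, the $\Q$-span of its faithful $2$-dimensional representation together with its Hasse invariants.

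The three exceptional groups are handled by direct computation. $SL(2,3)$ spans the Hurwitz order in $(-1,-1)_{\Q}$. The binary octahedral group spans a quaternion algebra over $\Q[\sqrt2]$. $SL(2,5)$ spans the icosian algebra over $\Q[\sqrt5]$. In the last two cases the centre $K$ is real quadratic and $D$ is ramified exactly at the two infinite places; since the number of ramified places is even, $\Inv(D)=\{\infty\}$ follows once one checks that no finite place ramifies (for instance via Theorem \ref{unidist}(iii) and the fact that $m_2\le 2$ must be compatible with the parity).

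For $Q_{2m}=\langle x,y\mid x^m=1,\ y^2=x^{m/2},\ yxy^{-1}=x^{-1}\rangle$, the span is the cyclic algebra
$$(F/K,\sigma,-1),\qquad F=\Q[\zeta_m],\quad K=\Q[\zeta_m+\zeta_m^{-1}],$$
where $\sigma$ is complex conjugation. So $[F:K]=2$, $F=F(K)$, and $K$ is the maximal real subfield. The small cases give the remaining low-degree algebras:
\begin{itemize}
\item $m=4$: $K=\Q$ and $D=(-1,-1)_{\Q}$, which repeats the $SL(2,3)$ algebra;
\item $m=6$: $D=(-1,-3)_{\Q}$;
\item $m=8$: $D$ is a quaternion algebra over $\Q[\sqrt2]$;
\item $m=12$: $D$ is a quaternion algebra over $\Q[\sqrt3]$.
\end{itemize}
Once the invariants are computed, these account for the lists in the first two bullet points of the statement.

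The main step is computing the invariants of $(F/K,\sigma,-1)$ in general. All $[K:\Q]$ real places ramify, because $D$ is definite. At a finite place $P$ of $K$ that is unramified in $F$, the element $-1$ is a unit and hence a local norm, so $P$ does not ramify in $D$. Now $F/K$ is unramified at all finite places unless $m$ is of the form $2^a$ or $2p^a$ with $p$ an odd prime. In the prime power case there is exactly one prime of $K$ above $p$, and it is totally ramified. The reciprocity law $\sum_P \inv_P(D)=0$ then determines its invariant: the invariant at that prime is $\tfrac12$ exactly when $[K:\Q]$ is odd.
\begin{itemize}
\item For $m=2^a$ with $a\ge 4$ we have $[K:\Q]=2^{a-2}$, which is even, so $\Inv(D)=\{\infty\}$.
\item For $m=2p^a$ we have $[K:\Q]=p^{a-1}(p-1)/2$, which is odd if and only if $p\equiv 3\pmod 4$. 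This gives $\Inv(D)=\{(\tfrac12,p),\infty\}$ in that case and $\{\infty\}$ otherwise.
\end{itemize}
Since $K\neq\Q$ forces $[K:\Q]\ge 2$, the cases with $[K:\Q]\ge3$ arise only from $Q_{2m}$, and these are exactly the cases listed in the third bullet point.

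I expect the main obstacle to be the local bookkeeping at $2$ and at the ramified prime $p$. There one must make sure that the parity argument really isolates a single finite place. One must also separate the genuinely new algebras from coincidences among the small groups, such as $Q_8\leq SL(2,3)$ and $Q_{16}$ inside the binary octahedral group.
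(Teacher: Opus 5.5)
The paper gives no proof of this statement; it is imported from \cite[Theorem 6.1]{quat} (see also \cite{Amitsur}). Your route is the classical argument and is sound in substance. You replace $G$ by its image in $D^{\times}$, embed into $\mathbb{H}^{\times}$ via a real place, and run through the finite subgroups of $SU(2)$. You then compute the invariants of $(\Q[\zeta_m]/\Q[\zeta_m+\zeta_m^{-1}],\sigma,-1)$ using the fact that $-1$ is a local norm at unramified places, together with reciprocity. Compared with the bare citation, this makes visible why $[K:\Q]\ge 3$ forces the binary dihedral case. It also shows why the parity of $[K:\Q]=\varphi(m)/2$ decides the invariant at the unique totally ramified prime.

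A few repairs are needed.

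\emph{Exceptional groups.} Your justification for the binary octahedral and icosahedral cases does not work as stated. Parity together with Theorem \ref{unidist}(ii),(iii) cannot exclude odd primes. For quaternion algebras (iii) is vacuous, and an odd prime that splits in $K$ would contribute two ramified places, which is compatible with parity.

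The simplest fix is to note that these groups contain $Q_{16}$ and $Q_{20}$ respectively. The $\Q$-spans of these subgroups are already $8$-dimensional, hence equal to $D$, since $K=\Q[\sqrt2]$ resp.\ $\Q[\sqrt5]$. So these cases reduce to your binary dihedral computation with $m=8$ and $m=10$. Alternatively, both groups contain $Q_8$, so $D=(-1,-1)_K$ can ramify at finite places only above $2$. There is a unique prime above $2$ in $\Q[\sqrt2]$ (ramified) and in $\Q[\sqrt5]$ (inert), so parity kills it.

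\emph{Missing case.} Your list of small binary dihedral cases omits $m=10$. It gives $K=\Q[\sqrt5]$ and the same algebra as $SL(2,5)$, by uniqueness of the quaternion algebra over a real quadratic field ramified exactly at its two infinite places.

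\emph{Conductor claim.} The assertion $F(K)=\Q[\zeta_m]$ is false for $m=4,6$, where $K=\Q$. It holds exactly when $\varphi(m)>2$, which is all the third bullet needs. It deserves the one-line conductor argument: the kernel of $(\Z/n\Z)^{\times}\to(\Z/d\Z)^{\times}$ for a proper divisor $d$ of $n$ is never contained in $\{\pm1\}$ once $\varphi(n)>2$ and $n\not\equiv 2\pmod 4$.
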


There are more examples of quaternion algebras $D$
 with $\deg(D) = 1$: 

 \begin{remark}\label{degeq1}
 Take $D=(-1,-11)_{\Q }$ (i.e. $Z(D) = \Q $, $\Inv(D) = \{ (\frac{1}{2},11),\infty \}$).
  Then $D^{5\times 5}$ is a direct summand of $\Q G$ for
    $$G=C_{22} . C_{10}  = \langle a,b \mid a^{22}, b^{10} = a^{11}, (a^2)^b = a^4 \rangle .$$
       By \cite{Feit} (see also \cite{Unger} for some corrections)
       the group $H=2.M_{22}.2i$ has a character $\chi $ of degree $252$
       with $[\chi ] = [D] $, so $\Q H$ has a direct summand
       $D^{126 \times 126}$.
       So here we find $\deg(D) = 1$.
 However, Theorem \ref{main} below shows that $\deg_{\sol}(D) = 5$.
 \end{remark}

 \subsection{Schur algebras of even degree} 

 Using the classification of finite simple groups, Navarro and Tiep have 
 shown the following result on character fields of odd degree irreducible 
 characters:

\begin{theorem}(\cite[Theorem A1]{NavarroTiep}) \label{odddeg}
Let $\psi $ be an absolutely irreducible
ordinary character of some finite group $G$
such that $\psi(1) $ is odd.
If $\zeta _{2^a} \in F(\Q(\psi ))$ then $\zeta _{2^a} \in \Q(\psi )$.
\end{theorem}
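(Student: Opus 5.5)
Since this is a theorem quoted from \cite{NavarroTiep} whose proof depends on the classification of finite simple groups, the paper will only use it as a black box. If I had to prove it, the plan would be a minimal counterexample argument, reducing to quasisimple groups, with the real work at the end done via CFSG.

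\emph{Setting up.} Put $K=\Q(\psi)$ and $F=F(K)$, and suppose $\zeta_{2^a}\in F\setminus K$. Since $\Gal(F/K)$ is abelian, I will choose $\sigma\in\Gal(\Q[\zeta_{|G|}]/\Q)$ with the following properties:
\begin{itemize}
\item[(a)] $\sigma$ fixes $\psi$;
\item[(b)] $\sigma$ fixes all odd-order roots of unity;
\item[(c)] $\sigma$ has $2$-power order;
\item[(d)] $\sigma(\zeta_{2^a})\neq\zeta_{2^a}$.
\end{itemize}
Such a $\sigma$ is obtained by taking the $2$-part of a suitable element of $\Gal(F/K)$. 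Property (b) should be achievable because the $2$-part and the odd part of the conductor split the cyclotomic Galois group as a direct product. The statement then becomes: no $\sigma$-invariant odd-degree character can have $2$-power roots of unity in its conductor field that are moved by $\sigma$. I take $(G,\psi)$ to be a counterexample with $|G|$ minimal.

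\emph{Reduction via Clifford theory.} Let $N\unlhd G$ be a proper normal subgroup and write $\psi_{|N}=e(\theta_1+\dots+\theta_t)$. Because $\psi(1)=et\,\theta_1(1)$ is odd, $t$ is odd and each $\theta_i(1)$ is odd. The element $\sigma$ permutes the $G$-orbit $\{\theta_i\}$, and $\langle\sigma\rangle$ is a $2$-group acting on a set of odd size, so it fixes some $\theta=\theta_i$. The Clifford correspondent $\alpha\in\Irr(I_G(\theta))$ of $\psi$ over $\theta$ is then also $\sigma$-fixed, by the uniqueness part of the Clifford correspondence. It has odd degree, and $\psi=\alpha^G$. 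Comparing the values of $\psi$ and $\alpha$, I aim to show two things:
\begin{itemize}
\item[(i)] the $2$-power roots of unity moved by $\sigma$ that lie in $F(\Q(\psi))$ must already lie in $F(\Q(\theta))$ or $F(\Q(\alpha))$;
\item[(ii)] minimality therefore forces $I_G(\theta)=G$ and $\theta$ to be $\sigma$-fixed with the same bad behaviour.
\end{itemize}
Iterating this, I would reduce to the case where every odd-degree constituent on a normal subgroup is invariant, and use Gallagher's theorem (Theorem~\ref{Gallagher}) to control extensions. This should leave the case where $G$ has a unique minimal normal subgroup, which is a central product of quasisimple groups (or abelian, and then directly handled by linear characters). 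Passing to a single component, the problem becomes a statement about $\mathrm{Aut}(S)$-orbits of odd-degree characters of quasisimple groups $S$.

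\emph{Main obstacle.} The main obstacle is the final step: verifying for every quasisimple group $S$ that its odd-degree irreducible characters satisfy the conclusion in a form strong enough to survive the reduction. This means an $\mathrm{Aut}(S)$- and Galois-equivariant version. For alternating groups, I would use the rationality of characters of symmetric groups. For sporadic groups, I would check the Atlas. For groups of Lie type, the first attempt would be Lusztig's parametrisation. Odd-degree characters lie in Lusztig series of semisimple elements $s$ with centraliser containing a Sylow $2$-subgroup of the dual group, and the field of values is controlled by the $2$-part of the order of $s$ together with the unipotent character fields. I would aim to show that the relevant $2$-power roots of unity occurring there actually appear as character values.
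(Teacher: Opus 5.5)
The paper gives no proof of this statement. It is quoted from Navarro--Tiep, whose proof uses the classification, so your black-box reading matches the paper exactly. What you add beyond that is an outline rather than a proof: the quasisimple case is explicitly left open. Some of the steps you do state also fail. Property (b) cannot be arranged in general. For $K=\Q(\sqrt{3})$ we have $F(K)=\Q[\zeta_{12}]\ni\zeta_4\notin K$, yet any automorphism fixing $\sqrt{3}$ and $\zeta_3$ fixes $\Q(\sqrt{3},\sqrt{-3})=\Q[\zeta_{12}]$, hence fixes $\zeta_4$. Your direct-product argument conflates two different things attached to some $\tau\in\Gal(\Q[\zeta_{|G|}]/K)$. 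The first is the $2$-primary component of $\tau$: it is a power of $\tau$, so it still fixes $K$, but it may move odd-order roots of unity (here the only candidate $\tau$ is complex conjugation, of order $2$). The second is the projection of $\tau$ onto the $2$-power cyclotomic factor: it is trivial on odd-order roots of unity, but in general it does not fix $K$ (here it sends $\sqrt{3}\mapsto-\sqrt{3}$). Since (b) is never used afterwards, simply drop it; (a), (c), (d) already give a reformulation equivalent to the theorem.

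The Clifford step does only half of what you claim. Forcing $I_G(\theta)=G$ is fine: $\Q(\psi)\subseteq\Q(\alpha)$ gives $F(\Q(\psi))\subseteq F(\Q(\alpha))$, and $\alpha$ is $\sigma$-fixed of odd degree, so it is a smaller counterexample. But an invariant $\theta$ need not inherit the bad behaviour. Already $N=1$, or any $N$ whose constituent $\theta$ is rational-valued, shows this. The offending $2$-power roots of unity can come entirely from how $\psi$ lies over $\theta$, i.e.\ from a projective representation of $G/N$. Gallagher's theorem cannot control this: an invariant $\theta$ need not extend to $G$, and even when it does, Gallagher says nothing about the fields of values of the extensions. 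What is needed is a Galois-compatible way of replacing the character triple $(G,N,\theta)$ by a triple for a smaller group while tracking the $\sigma$-action on fields of values. That is where the real work of such a reduction lies, and your outline does not supply it. The claimed endpoint is also unjustified. For a faithful quasi-primitive $\psi$ the abelian normal subgroups are central, but $F(G)$ can still contain extraspecial $r$-groups ($r$ odd) on which $\psi$ is not a multiple of a linear character. So the ``abelian'' case is not settled by linear characters. Even for solvable $G$ the statement has real content. There it follows from Theorem~\ref{Cram}: $[K[\zeta_{2^a}]:K]$ is a power of $2$ dividing $[F(K):K]$, which divides the odd number $\psi(1)$.
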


This theorem implies that there are Schur algebras $D$ with 
$\deg(D) \neq 1$: 

\begin{corollary} \label{odddegD}
	Let $D$ be a Schur algebra of odd index such that 
	$K:=Z(D)$ does not satisfy Theorem \ref{odddeg}, i.e. there 
	is $a\in \N$ such that $\zeta _{2^a} \in F(K) \setminus K$. 
	Then $\deg(D) $ is even.
\end{corollary}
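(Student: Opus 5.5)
Suppose $D^{n\times n}$ is a direct summand of $\Q G$ for some finite group $G$. We aim to show that $n$ is even, and then $\deg(D)$, being the gcd of all such $n$, is even as well.

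The summand $D^{n\times n}$ corresponds to the Galois orbit of some $\chi\in\Irr(G)$ with $\Q(\chi)=K$ and $[\chi]=[D]$. The first step is to relate $n$ to $\chi(1)$. The simple component $\langle\rho(g)\rangle_\Q$ is isomorphic to $D^{n\times n}$, and after extending scalars from $K$ to $\C$ it becomes $\C^{nm\times nm}$, where $m=\ind(D)$. Since this is the image of an irreducible representation affording $\chi$, we get $\chi(1)=nm$. This is the standard fact that the multiplicity of $\chi$ in the associated $K$-representation equals the Schur index; it is implicit in Remark~\ref{mdivdeg}.

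Next we use the parity of $m$. By hypothesis $m$ is odd, so $n$ and $\chi(1)$ have the same parity.

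Now suppose, for contradiction, that $n$ is odd. Then $\chi(1)$ is odd, and Theorem~\ref{odddeg} applies to $\psi=\chi$. Since $F(\Q(\chi))=F(K)$ contains $\zeta_{2^a}$, the theorem gives $\zeta_{2^a}\in K$. This contradicts the choice of $a$, so $n$ must be even.

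The only step that needs any care is the identity $\chi(1)=n\,\ind(D)$, which we treat as standard.
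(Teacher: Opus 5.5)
Your proof is correct and is exactly the argument the paper intends; the paper states the corollary as an immediate consequence of Theorem~\ref{odddeg} without writing out details. Every $n$ with $D^{n\times n}$ a summand satisfies $\chi(1)=n\cdot\ind(D)$ with $\ind(D)$ odd, so an odd $n$ would give odd $\chi(1)$, and Theorem~\ref{odddeg} would then force $\zeta_{2^a}\in K$ (the identity $\chi(1)=n\,\ind(D)$ is standard, via $D^{n\times n}\otimes_K\C\cong\C^{\chi(1)\times\chi(1)}$ and Burnside's theorem).
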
 

It is easy to construct finite groups $G$ and 
irreducible characters $\chi \in \Irr (G)$ such that $[\chi ] = [D]$ as in 
Corollary \ref{odddegD}. 
Put $H:=C_7 : C_9$ to be one of the groups from Corollary \ref{cor2} below. 
Then there is a faithful character $\chi \in \Irr (H)$ with $\Q(\chi ) = \Q[\sqrt{-7},\zeta_3] =: K$ such that 
$\Inv([\chi ]) = \{ (\frac{1}{3},7) \} $. 
Now take $G:=H \times D_{16} $ and $\psi := \chi \otimes \varphi $, where 
$\varphi \in \Irr(D_{16})$ is faithful of degree 2 and $\Q(\varphi ) = \Q[\sqrt{2}]$. 
Then $[\psi ]$ is the class of a Schur algebra as in Corollary \ref{odddegD}. 


\section{The solvable degree of a division algebra} 

\subsection{The main theorem and some examples} 

\begin{theorem}\label{main}
	Let $G$ be a finite solvable group and $\chi \in \Irr (G) $.
Put    $K:=\Q(\chi )$ and 
	recall that 
	$$P(\chi ) = \prod \{ p\in \PP \mid m_p(\chi ) > 1 \} .$$
	Then the following two divisibility conditions hold: 
	\begin{itemize}
		\item[(a)] $[F(K[\zeta _{P(\chi)} ]) : K] $ divides $\chi (1)$. 
		\item[(b)] $[K[\zeta _{2P(\chi )} ] : K] $ divides $\chi (1)$.
	\end{itemize}
       \end{theorem}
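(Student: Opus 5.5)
The plan is to argue by induction on $|G|$ and treat the two statements together, taking $(G,\chi)$ to be a counterexample with $|G|$ minimal. The preliminary reduction is to a quasi-primitive situation. Suppose $N\unlhd G$ and $\theta\in\Irr(N)$ is a constituent of $\chi_{|N}$ whose inertia group $T=I_G(\theta)$ is proper, and let $\alpha\in\Irr(T)$ be the Clifford correspondent. Then $\chi=\alpha^G$ and $\chi(1)=[G:T]\alpha(1)$.

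In this situation I would compare $K=\Q(\chi)$ with $L:=\Q(\alpha)\supseteq K$ prime by prime.
\begin{itemize}
\item If $L=K$, then $\alpha$ occurs in $\chi_{|T}$ with multiplicity one, so Corollary \ref{brauer1} gives $[\chi]=[\alpha]$. Induction applied to $(T,\alpha)$ then gives both divisibilities, since $\alpha(1)\mid\chi(1)$.
\item If $L\neq K$, we need the $p$-local indices of $\chi$ to be controlled by those of $\alpha$ together with the factor $[L:K]$. At primes where $L_P/K_P$ is unramified and $m_p(\alpha)=1$, Theorem \ref{Yamadagen}, or its proof via the intermediate group $H$, gives $m_p(\chi)\mid\gcd(\alpha(1),[L:K])$.
\end{itemize}
The bookkeeping should show that $[F(L[\zeta_{P(\alpha)}]):L]\cdot[L:K]$ is a multiple of the required $[F(K[\zeta_{P(\chi)}]):K]$. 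The index $[G:T]$ absorbs the field growth $[L:K]$, because the $G$-conjugates of $\theta$ realise the Galois conjugation. I expect this to follow the proof of Theorem \ref{Cram}, now carrying along the extra cyclotomic factor $\zeta_{P}$.

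In the irreducible case we may assume that $\chi$ restricts homogeneously to every normal subgroup. Since $G$ is solvable, such a quasi-primitive character factors as a product $\chi=\prod_q\chi_q$ of $q$-special characters $\chi_q$ for distinct primes $q$, each of $q$-power degree (Gajendragadkar). Fix $p$ with $m_p(\chi)>1$. The product $\chi_{p'}:=\prod_{q\neq p}\chi_q$ is $p'$-special, so $m_p(\chi_{p'})=1$ by Lemma \ref{SchurIndex1} and Remark \ref{remSchurIndex1}. Hence the local index at $p$ must come from the $p$-special factor $\chi_p$, and the interplay of its field with that of $\chi_{p'}$ has to be examined.

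By Theorem \ref{unidist}, $m_p\mid p-1$ and $\zeta_{m}\in K$, so a nontrivial $p$-local index forces a tame ramified phenomenon at $p$. I then want to deduce that $p$ divides the conductor data seen by $\chi$, i.e.\ that $\zeta_p$ lies in $F(K)$ or is forced by some constituent. The goal of this step is the sharper divisibility $[F(K)(\zeta_{P(\chi)}):F(K)]\mid\chi(1)/[F(K):K]$, obtained by showing that the $p$-parts of $\chi(1)$ coming from $\chi_p$ account for $[\Q(\zeta_p)F(K):F(K)]$.

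Part (b) should follow from the same factorisation. The factor $\zeta_4$ arises when $m_2>1$ or $\infty$ is ramified; the quaternion-type structure at $2$, compare Theorem \ref{dim1}, gives a factor $2$ in $\chi(1)$ unless $\zeta_4\in K$.

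I expect the main obstacle to be the second step: showing that a nontrivial $p$-local Schur index, in the quasi-primitive case, forces $\chi(1)$ to be divisible by the full relative degree $[K(\zeta_p):K]$ and not just by $m_p$. My first attempt would be to restrict $\chi$ to a suitable $p$-elementary subgroup $C\rtimes P$ detecting $m_p(\chi)$ via Theorem \ref{brauer}. There the Schur index of a faithful character of $C_{p^a}\rtimes Q$-type groups is computed explicitly: $C$ cyclic, with the Galois action of $Q$ on $\zeta_p$. The aim is to show that the orbit length of $Q$ on $\langle\zeta_{p}\rangle$-characters, which is $[K(\zeta_p):K]$, divides the degree.
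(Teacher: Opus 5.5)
\textbf{There is a genuine gap.} It sits where you defer to ``bookkeeping'' in the imprimitive case $\chi=\alpha^G$ with $L=\Q(\alpha)\neq K$.

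This case cannot be disposed of by induction before passing to a quasi-primitive $\chi$, because it is exactly where the divisibility comes from. In Corollaries \ref{cor1} and \ref{cor2}, $\chi$ is induced from a \emph{linear} character $\alpha$. So $P(\alpha)=1$ and the induction hypothesis for $(T,\alpha)$ says nothing, while $m_p(\chi)>1$.

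What you actually need is this: every odd $p$ with $m_p(\chi)>1$ forces $\zeta_p$ into $\Q(\alpha)$, for an $\alpha$ with $\chi=\alpha^G$ and $[G:T]=[\Q(\alpha):K]$. Theorem \ref{Yamadagen} does not give this. When $L_P/K_P$ is unramified and $m_p(\alpha)=1$, it only yields $m_p(\chi)\mid\gcd(\alpha(1),[L:K])$, which is no contradiction for odd $p$. Your alternative hope, $\zeta_p\in F(K)$, is false in general: take $(-1,-3)_{\Q}$.

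The paper supplies the missing idea in two steps.
\begin{itemize}
\item Lemma \ref{TisNormal} uses minimality (via $N_G(T)$, the Yamada-type subgroup $H$, and Corollary \ref{brauer1}) to force $T\unlhd G$ and $G/T\cong\Gal(L/K)$, so $[G:T]=[L:K]$ exactly.
\item The paper then descends a chief series through inertia groups until $G_n=T_n$. Lemma \ref{oddzeta} shows that $\zeta_{P(\chi)}\in\Q(\alpha)$. If $\zeta_p\notin\Q(\alpha)$, then every $\chi_j$ is $p'$-special by Lemma \ref{pspeciallemma} and Theorem \ref{Gallagher}. Hence $\alpha$ and all its $G$-conjugates are $p'$-special, and their $p$-regular restrictions are distinct irreducible Brauer characters. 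So $\chi'$ is irreducible and $m_p(\chi)=1$ by Remark \ref{remSchurIndex1}.
\end{itemize}
Part (a) then follows at once from Theorem \ref{Cram} applied to $\alpha$. Your quasi-primitive analysis (factorisation into $q$-special characters, restriction to $p$-elementary subgroups) is only announced, and you flag its key step as open, so it cannot replace this argument.

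\textbf{Your sketch of (b) has a similar problem.} A factor $2$ in $\chi(1)$ is automatic once $m_2(\chi)=2$, by Remark \ref{mdivdeg}. What (b) needs is this factor \emph{in addition to} $[K[\zeta_{P(\chi)}]:K]$, and that degree may itself be even.

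The paper obtains the extra factor from $\chi(1)=[E:K]\,\alpha(1)$ with $K[\zeta_{P(\chi)}]\le E$, by a three-way split:
\begin{itemize}
\item either $\alpha(1)$ is even;
\item or $2$ ramifies in $E$, so $\zeta_4\in F(E)$ and Theorem \ref{Cram} applies;
\item or $m_2(\alpha)=1$ and $E/K$ is unramified at $2$. Then Theorem \ref{Yamadagen} gives $m_2(\chi)\mid\gcd(\alpha(1),[E:K])$, which is odd, a contradiction.
\end{itemize}
Theorem \ref{dim1} plays no role here. Ramification at $\infty$ is also irrelevant to (b), since $P(\chi)$ involves only finite primes.
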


       \begin{remark}
	       Clearly (a) implies (b) if either $2\not\in {\mathcal P}(\chi )$ 
	       or the prime $2$ is ramified in $K/\Q $. 
       \end{remark}
  
 \begin{remark}
 The authors are aware of two papers, \cite{Pendergrass} and \cite{YI},
 that deal with  variations of the degree of a Schur algebra.
 Pendergrass shows that for any finite group $G$ and any $\chi \in \Irr(G)$ 
	 we have 
	 $\chi(1) \geq [K[\zeta _{2P(\chi )} ] : K] $ where $K:=\Q(\chi )$.
	 The paper \cite{YI} claims that Theorem \ref{main} (b) 
	 holds for arbitrary groups $G$, which is known to be wrong 
	 (see for instance Remark \ref{degeq1}).
 \end{remark}

       For some Schur algebras $D$ we can find solvable groups $G$ 
       where $\chi(1) = [K[\zeta _{2P(\chi)} ] : K]$, 
       as the well known examples in the following three corollaries show:

\begin{corollary} \label{cor1}
	Let $p$ be an odd prime and $K\leq \Q[\zeta _p]$ be such that 
	$[K:\Q ] $ is odd. Then $K$ is a real number field and 
	$t:=[\Q[\zeta _p]:K]$ is even. 
	Let $D$ be the quaternion algebra with center $K$ ramified at all
	infinite places of $K$ and at the unique place dividing $p$, 
	so ${\mathcal P}([D]) = \{ \infty, p \}$.
	Then $\deg_{\sol}(D) = t/2$.
\end{corollary}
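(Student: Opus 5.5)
The plan is to prove the two inequalities $t/2 \mid \deg_{\sol}(D)$ and $\deg_{\sol}(D)\mid t/2$ separately. The lower bound will come from Theorem \ref{main} (b). The upper bound will come from an explicit metacyclic group, so it is in fact the equality $n=t/2$ for one summand.

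\emph{Preliminaries.} The group $\Gal(\Q[\zeta_p]/\Q)\cong(\Z/p\Z)^\times$ is cyclic of even order $p-1$. Let $U:=\Gal(\Q[\zeta_p]/K)$, so $|U|=t$ and $t=(p-1)/[K:\Q]$. Since $[K:\Q]$ is odd and $p-1$ is even, $t$ is even. Then $U$ contains the unique involution, which is complex conjugation, so $K$ is real.

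\emph{Lower bound.} Suppose $D^{n\times n}$ is a direct summand of $\Q G$ for a solvable group $G$, and let $\chi\in\Irr(G)$ be an associated character. Then $\Q(\chi)=K$, $[\chi]=[D]$ and $\chi(1)=n\cdot\ind(D)=2n$. Since ${\mathcal P}([D])=\{\infty,p\}$, we have $P(\chi)=p$. Hence $K[\zeta_{2P(\chi)}]=K[\zeta_p]=\Q[\zeta_p]$, which has degree $t$ over $K$. Theorem \ref{main} (b) then gives $t\mid 2n$, that is, $t/2\mid n$. Therefore $t/2$ divides $\deg_{\sol}(D)$.

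\emph{Upper bound (construction).} Let $u\in(\Z/p\Z)^\times$ generate $U$, which has order $t$. Put
$$G:=\langle a,b\mid a^p,\ b^{2t},\ b^{-1}ab=a^u\rangle ,$$
a solvable group of order $2tp$. Here $z:=b^t$ is central of order $2$, and $N:=\langle a,z\rangle\cong C_p\times C_2$ is normal of index $t$. Let $\lambda\in\Irr(N)$ be given by $\lambda(a)=\zeta_p$ and $\lambda(z)=-1$. The stabiliser of $\lambda$ in $G$ is $N$, because $b^j$ sends $\zeta_p$ to $\zeta_p^{u^j}$. Hence $\chi:=\lambda^G$ is irreducible of degree $t$. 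On $N$ its values are $\chi(a^k z^\epsilon)=(-1)^\epsilon\sum_{v\in U}\zeta_p^{kv}$, and $\chi$ vanishes off $N$, so $\Q(\chi)=K$.

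The simple component of $\Q G$ belonging to $\chi$ is spanned over $K$ by $\Q[\zeta_p]$ (the image of $\Q\langle a\rangle$) and by the image of $b$. Conjugation by $b$ acts as the generator $\sigma:\zeta_p\mapsto\zeta_p^u$ of $\Gal(\Q[\zeta_p]/K)$, and $b^t$ maps to $-1$. So the component is a quotient of the cyclic algebra $(\Q[\zeta_p]/K,\sigma,-1)$. That algebra is simple of dimension $t^2$ over $K$, and the component has dimension $\chi(1)^2=t^2$ over $K$. Hence the two are equal (exactly as in the proof of Theorem \ref{Yamadagen}).

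\emph{Upper bound (identifying the class).} It remains to show $[(\Q[\zeta_p]/K,\sigma,-1)]=[D]$. The class has order at most $2$ because $(-1)^2=1$. At a finite place $Q\nmid p$ the extension is unramified and $-1$ is a unit, hence a local norm, so the invariant is trivial. At each real place of $K$ the completion of $\Q[\zeta_p]$ is $\C$, and $-1$ is not a norm from $\C$, so the invariant is $\frac12$. There are $[K:\Q]$ such places, an odd number. By the product formula (sum of invariants $=0$), the invariant at the unique place over $p$, which is totally ramified, must also be $\frac12$. By the Hasse principle the class is therefore that of the quaternion algebra ramified exactly at $\infty$ and $p$, namely $[D]$. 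Since $\dim_K D=4$ and the component has dimension $t^2$, the component is $D^{(t/2)\times(t/2)}$. Hence $\deg_{\sol}(D)$ divides $t/2$, and together with the lower bound $\deg_{\sol}(D)=t/2$.

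The main obstacle is this last step, identifying the local invariants, and in particular the invariant at $p$. Rather than compute a norm residue symbol in a totally ramified extension, I would obtain it from the parity argument above, which is where the hypothesis that $[K:\Q]$ is odd is used.
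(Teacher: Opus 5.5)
Your proposal is correct and takes the same route as the paper: the same group $C_{2p}.C_t\cong C_p\rtimes C_{2t}$ with its faithful character of degree $t$ gives the upper bound, and Theorem \ref{main}(b) gives the lower bound $t/2\mid n$. The paper uses that lower bound only implicitly and simply asserts $[\chi]=[D]$. You additionally prove that assertion, by identifying the component with the cyclic algebra $(\Q[\zeta_p]/K,\sigma,-1)$ and getting the invariant at $p$ from the product formula, which is where the oddness of $[K:\Q]$ enters.
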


\begin{proof}
	Let $\sigma $ be a generator of $\Gal (\Q[\zeta _p]/K )$.
	Then $\sigma (\zeta _p) = \zeta _p ^a $ for some $1\leq a \leq p-1$. 
	Put 
	$$G:= \langle x, y , z \mid x^{p} = 1, x^y = x^a, y^t=z, z^2 =1, z \mbox{ central } \rangle \cong C_{2p} . C_t .$$ 
	Up to the action of the outer automorphism group there is a 
	unique faithful $\chi \in \Irr(G)$. Then 
	$\chi (1) = t$ and $[\chi ] = [D ]$.
\end{proof}

\begin{corollary} \label{degsolquat}
The solvable degree of a rational quaternion division algebra $D$ is
	$$\deg_{\sol}(D) = \frac{[\Q[\zeta _{2P(D)}]:\Q]}{2}.$$
\end{corollary}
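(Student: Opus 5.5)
The lower bound comes from Theorem \ref{main}(b) with $K=\Q$. The upper bound comes from explicit solvable groups, built as direct products of the groups in Corollary \ref{cor1} and $Q_8$.

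\emph{Lower bound.} Let $D$ be a rational quaternion Schur algebra and $G$ solvable with $D^{n\times n}$ a direct summand of $\Q G$, afforded by $\chi\in\Irr(G)$ with $\Q(\chi)=\Q$ and $[\chi]=[D]$. The splitting field $\C$ gives $\chi(1)=n\cdot\ind(D)=2n$. Theorem \ref{main}(b) with $K=\Q$ gives $[\Q[\zeta_{2P(D)}]:\Q]\mid 2n$, so $[\Q[\zeta_{2P(D)}]:\Q]/2$ divides $n$. Since $D$ is a division algebra, it ramifies at an even, nonzero number of places, so $P(D)\neq 1$. By Theorem \ref{unidist}(iii) every odd $p\mid P(D)$ has $m_p=2\mid p-1$. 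Hence $\varphi(2P(D))$ is even unless $P(D)=2$, and in that case the bound is $1$. This gives $[\Q[\zeta_{2P(D)}]:\Q]/2 \mid \deg_{\sol}(D)$.

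\emph{Upper bound.} I would realise $D$ with $\chi(1)=\varphi(2P(D))$ exactly, using direct products, for which $[\chi_1\otimes\chi_2]=[\chi_1][\chi_2]$ and degrees multiply.
\begin{itemize}
\item For an odd prime $p$, Corollary \ref{cor1} with $K=\Q$ (so $t=p-1$) gives a solvable group $C_{2p}.C_{p-1}$ with a rational character of degree $p-1$. Its class is the quaternion algebra $D_p$ with $\mathcal P(D_p)=\{\infty,p\}$.
\item For the prime $2$, the faithful character of $Q_8$ has degree $2$ and class $(-1,-1)_\Q$.
\end{itemize}
Write $D$ as the product in $\Br(\Q)_2$ of the $D_p$ over the odd $p\mid P(D)$; the parity of their number automatically fixes the invariant at $\infty$.

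If $2\nmid P(D)$, the tensor product of the characters on $\prod_p C_{2p}.C_{p-1}$ has degree $\prod_p(p-1)=\varphi(2P(D))$ and class $[D]$. So $n=\varphi(2P(D))/2$ is attained, which settles this case.

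If $P(D)=2$, then $D=(-1,-1)_\Q$ comes from $Q_8$ with $n=1$.

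\emph{Main obstacle.} The difficult case is $2\mid P(D)$ with at least one odd prime present. Tensoring with the $Q_8$ character doubles the degree and yields only $n=\varphi(2P(D))$, one factor of $2$ too many. Instead I would replace a single factor, say for an odd $p\mid P(D)$, by a solvable group realising the algebra with $\mathcal P=\{2,p\}$ (or $\{2,p,\infty\}$ as needed) in degree $p-1$ rather than $2(p-1)$.

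My first attempt would be a Cor.~\ref{cor1}-type group $C_p\rtimes Q$. Here $Q$ is a generalized quaternion (or cyclic) $2$-extension of $C_{(p-1)_{2'}}$ acting on $C_p$ through a cyclic quotient of order $p-1$, with the central involution of $Q$ acting trivially. The induced character then has degree $p-1$, and its $2$-local and $\infty$ invariants come from the quaternion part. I would compute the local invariants via Theorem \ref{Yamadagen} and Corollary \ref{brauer1}. Alternatively, $\gcd$ arguments over several constructions (e.g.\ different choices of which $p$ absorbs the prime $2$) might already cut the extra factor $2$.
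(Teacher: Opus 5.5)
Your lower bound is correct, and it is what forces the formula: Theorem \ref{main}(b) with $K=\Q$ together with $\chi(1)=2n$. The gap is in the upper bound for the case $2\mid P(D)$ with an odd prime present. It comes from an arithmetic slip, not from a real obstacle.

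When $2\mid P(D)$ you have $2P(D)=4\prod_p p$, the product running over the odd $p\mid P(D)$. Hence $[\Q[\zeta_{2P(D)}]:\Q]=\varphi(4)\prod_p(p-1)=2\prod_p(p-1)$. You appear to have used $\varphi(P(D))=\prod_p(p-1)$ in its place.

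So take the degree-$2$ character of $Q_8$ tensored with the characters $\chi_p$ of the groups $C_{2p}.C_{p-1}$ from Corollary \ref{cor1}. This character has degree exactly $[\Q[\zeta_{2P(D)}]:\Q]$ and gives $n=[\Q[\zeta_{2P(D)}]:\Q]/2$, as required. Its class is $[D]$, because the finite invariants agree and the invariant at $\infty$ is forced by reciprocity.

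This is precisely the paper's proof. The paper takes the central product of the $G_p$ for odd $p\mid P(D)$, together with $G_2=Q_8$ when $2\mid P(D)$, and uses $\chi=\prod_p\chi_p$. This handles all cases uniformly, and your direct product works equally well.

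As written, though, your argument leaves this case open, and the remedy you propose is impossible. Suppose a solvable group realised the quaternion algebra ramified at $\{2,p\}$ through a character of degree $p-1$. Theorem \ref{main}(b) requires $[\Q[\zeta_{4p}]:\Q]=2(p-1)$ to divide $\chi(1)$, so this would contradict the lower bound you had just established. For the same reason, no $\gcd$ over several constructions can go below that bound.

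Two minor points. The set $\{2,p,\infty\}$ has odd cardinality, so it is not the ramification set of any rational quaternion algebra. Also, $\varphi(2P(D))$ is even when $P(D)=2$ as well (it equals $2$); for integrality all you need is $P(D)\neq 1$.
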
 

\begin{proof} 
	For the odd primes $p$ dividing $P(D)$ we take $K=\Q $ in Corollary 
	\ref{cor1}, define $G_p:= C_{2p}.C_{p-1}$ 
	and $\chi _p\in \Irr(G_p)$ 
	to be the group and the irreducible character   from
	the proof of Corollary \ref{cor1}. 
	If 2 ramifies in $D$ 
then we put $G_2:= Q_8 := C_4.C_2$ to be the 
	quaternion group of order $8$ and $\chi_2 \in \Irr(G_2)$ with
	$\chi_2(1) =2$.
	Let $G$ denote the central product of all $G_p$ with $m_p(D) =2$
	and $\chi := \prod _{p \mid P(D)} \chi_p \in \Irr(G)$. 
	Then $\chi (1) = [\Q[\zeta _{2P(D)}]:\Q] $ and $[\chi ] = [D] $.
\end{proof}

Examples with $m(\chi ) > 2$ and $\deg_{\sol}([\chi ])=1$ 
can be constructed from \cite{Amitsur}. 

\begin{corollary} \label{cor2} (see also \cite[Section 2]{Unger}, 
	\cite[Theorem 8.2]{Turull})
	Let $p$ be an odd prime, $p-1 = m n $ with $m>1$ and $\gcd(m,n)=1$.
	Let $E\leq \Q[\zeta _p]$ be the subfield of index $m = [\Q[\zeta _p]:E ]$ and 
	put $K=E[\zeta _{m}]$. 
	Let $B:=\{ 1\leq b \leq m-1 \mid \gcd(m,b) =1 \} \cong (\Z/m\Z)^{\times } .$ 
	For any $b\in B$  let $D$ be the 
	Schur algebra with center $K$ and $\Inv (D) = \{ (\frac{b}{m},p ) \} .$
	Then $D$ has index $m$ and $\deg_{\sol}(D) = 1$. 
\end{corollary}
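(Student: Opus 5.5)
The claim $\deg_{\sol}(D)=1$ concerns the matrix size $t$ in a direct summand $D^{t\times t}$ of $\Q G$, not the character degree. If $\chi\in\Irr(G)$ has $[\chi]=[D]$ and $m(\chi)=m=\ind(D)$, then the simple component of $\Q G$ belonging to $\chi$ is $D^{t\times t}$ with $t=\chi(1)/m$. So it suffices to exhibit, for each $b\in B$, one finite solvable group $G$ and one $\chi\in\Irr(G)$ with $\chi(1)=m$, $\Q(\chi)=K$ and $\Inv([\chi])=\{(\frac{b}{m},p)\}$. By the Hasse principle and Theorem \ref{unidist}, this determines $[\chi]=[D]$. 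It also shows that $D$ has index $m$, since the local index at $p$ is $m$.

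\textbf{Construction.} Following the pattern of $C_7:C_9$ mentioned after Corollary \ref{odddegD}, I would take
$$G:=\langle x,y\mid x^p=1,\ y^{m^2}=1,\ x^y=x^a\rangle\cong C_p: C_{m^2},$$
where $a$ has multiplicative order $m$ modulo $p$. Then $z:=y^m$ is central of order $m$, and $N:=\langle x,z\rangle\cong C_p\times C_m$ is normal of index $m$. For a linear character $\lambda\otimes\mu$ of $N$ with $\lambda$ faithful on $\langle x\rangle$ and $\mu$ faithful on $\langle z\rangle$, the stabiliser in $G$ is $N$ (as $y$ acts on $\Irr(\langle x\rangle)$ with orbits of length $m$). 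Hence $\chi:=(\lambda\otimes\mu)^G$ is irreducible of degree $m$ by Clifford theory. Its values vanish off $N$, and on $N$ they are $\mu(z^j)$ times the Gaussian periods of length $m$ in $\Q[\zeta_p]$. Therefore $\Q(\chi)=E[\zeta_m]=K$.

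\textbf{Computing the invariant.} The component is generated by $A=\langle\rho^G(n)\rangle\cong L:=K[\zeta_p]$ and by the image $\tilde y$ of $y$. Conjugation by $\tilde y$ acts on $L$ as $\sigma:\zeta_p\mapsto\zeta_p^a$, and $\tilde y^m=\mu(z)=\zeta_m^{c}$. So the component is the cyclic algebra $(L/K,\sigma,\zeta_m^c)$, exactly as in the proof of Theorem \ref{Yamadagen}. Its only possible ramification is at the primes over $p$, since $L/K$ is unramified elsewhere and $\zeta_m^c$ is a unit. At the primes over $p$, where $L/K$ is totally ramified of degree $[L:K]$, the invariant is computed by the norm residue symbol of the root of unity $\zeta_m^c$. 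This amounts to evaluating the tame symbol $(\zeta_m,\pi)_P$, i.e. the power $\zeta_m^{(N P-1)/m}$ reduced modulo $P$.

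Running over the choices of $c$ (equivalently of $\mu$) and of $a$ (equivalently of $\sigma$), I expect to realise every invariant $\frac{b}{m}$ with $b\in B$. If needed I would apply Galois conjugation and Theorem \ref{unidist}(iv) to move between them. This local symbol calculation is the main obstacle.

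Two points need care in that step. The first is checking that $[L:K]=m$, so that the invariant has exact order $m$. Here the hypothesis $\gcd(m,n)=1$ should be what makes $\zeta_m\notin$ the relevant fixed field, i.e.\ that $\Gal(\Q[\zeta_p]/E)$ meets $\Gal(\Q[\zeta_p]/\Q[\zeta_m])$ trivially. The second is matching the parameter $b$ with the Conway-polynomial normalisation of $P(K,p)$. If the direct computation is awkward, a fallback is to quote the known computation in \cite[Theorem 8.2]{Turull}, or \cite[Section 2]{Unger}, for these metacyclic groups.

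Once this is done, $\Q G$ contains $D^{1\times 1}$, so $\deg_{\sol}(D)=1$.
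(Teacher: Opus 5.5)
Your construction is the same as the paper's. You use the group $C_p:C_{m^2}$ with $z=y^m$ central and the degree-$m$ character induced from $N=\langle x,z\rangle$, which has $\Q(\chi)=E[\zeta_m]=K$. The equality $\chi(1)=m=\ind(D)$ then gives a summand $D^{1\times 1}$. The paper reads off $\{\Inv([\chi_i])\mid i\in B\}=\{(\frac{b}{m},p)\mid b\in B\}$ from \cite[Theorem 8.2]{Turull}, which is your fallback. Your description of the component as $(L/K,\sigma,\zeta_m^c)$ would give a self-contained computation instead. However, you have put the hypothesis $\gcd(m,n)=1$ in the wrong place.

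Since $p\nmid m$, we have $\Q[\zeta_p]\cap\Q[\zeta_m]=\Q$. Hence $\Gal(L/K)\cong\Gal(\Q[\zeta_p]/E)$, and $[L:K]=m$ holds for \emph{every} factorisation $p-1=mn$. Moreover, $[L:K]=m$ only shows that the order of $[\chi]$ divides $m$.

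The exact order comes from your tame symbol. For $P\mid p$, the residue field of $K_P$ is $\mathbb{F}_p$, because $p\equiv 1\pmod m$ splits in $\Q[\zeta_m]$ and $p$ is totally ramified in $E$. The local extension is totally and tamely ramified of degree $m$. So the local order is the order of $\bar\zeta_m^{\,c}$ in $\mathbb{F}_p^\times/(\mathbb{F}_p^\times)^m$, i.e.\ the order of $\bar\zeta_m^{\,c(p-1)/m}=\bar\zeta_m^{\,cn}$. For $c\in B$ this order is $m/\gcd(m,n)$. That is exactly where $\gcd(m,n)=1$ is used; for instance, with $p=17$ and $m=n=4$ the same construction gives a split algebra.

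Once one invariant has exact order $m$, the relation $(L/K,\sigma,\zeta_m^c)\sim(L/K,\sigma,\zeta_m)^{\otimes c}$ finishes the job. Varying $\mu$ alone then produces every $\frac{b}{m}$ with $b\in B$ at the fixed prime $P(K,p)$. So neither varying $a$ nor the Conway normalisation matters.

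Finally, your claim that only primes over $p$ can ramify needs $K$ to be totally imaginary. This holds for $m>2$. For $m=2$, however, $K=E=\Q[\zeta_p]^+$ is real and $L/K$ ramifies at every real place. In that case $[\chi]$ has invariants $\{(\frac12,p),\infty\}$, which is Corollary \ref{cor1} with $t=2$. The statement as written also overlooks this edge case.
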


\begin{proof}
	Let $\sigma $ be a generator of $\Gal (\Q[\zeta _p]/E )$.
        Then $\sigma (\zeta _p) = \zeta _p ^a $ for some $1\leq a \leq p-1$
	such that $a+p\Z \in (\Z/p\Z )^{\times }$ is of order $m$. Put
	$$G:= \langle x, y , z \mid x^{p} = 1, x^y = x^a, y^{m}=z, z^{m} =1, z \mbox{ central } \rangle \cong C_{p} : C_{m^2} .$$ 
	Then the center of $G$ is of order $m$ and generated by $z$. 
	Let $\chi _1\in \Irr(G)$ be an irreducible character (of degree $m$)
	with $\chi _1(z) = m \zeta _{m}$.
	The Galois group $\Gamma := \Gal(\Q[\zeta _{m}]/\Q )$ acts regularly 
	on the set of prime
	ideals of $K$ that divide $p$ and also on the set 
	$$X:=\{ \chi_1^{\gamma }  \mid \gamma \in \Gamma  \} = \{ \chi_i \mid i\in B \} $$
	with $\chi _i (z) = m\zeta _{m}^{i} $.
Using the formula for computing the Hasse invariants of these characters 
in \cite[Theorem 8.2]{Turull} we find that 
	$$\{ \Inv([\chi _i]) \mid i \in B \}  =
	\{ (\frac{b}{m} , p) \mid b\in B \}.$$ 
	As the index, $m$, of $D$ is the character degree $\chi _1(1)$, we get 
	$\deg_{\sol}(D) =1$.
\end{proof}


\subsection{Proof of Theorem \ref{main}} 

This section is devoted to a proof of Theorem \ref{main}. 
The proof is split into four lemmas, where we always keep the 
notation of Theorem \ref{main} and the 
following 
\begin{assumption}\label{assumption}
$G$ is a finite solvable group
and $\chi \in \Irr (G)$ is a faithful irreducible character of $G$ 
that is a counterexample to Theorem \ref{main}. 
We also assume that $G$ is minimal, i.e. 
Theorem \ref{main} holds for all proper subgroups $U <  G$ and all 
$\psi \in \Irr(U)$.
\end{assumption}

\begin{lemma} \label{TisNormal} 
	Let $N\unlhd G$ be a normal subgroup of $G$. 
	Then $\chi _{|N } = e(\psi_1+\ldots + \psi _t)$ with $e\in \N$ 
	and $\psi _i \in \Irr(N)$  for $i=1,\ldots , t$.
	Put $\psi := \psi _1$ 
 and $T:= I_G(\psi )$ the inertia group of $\psi $. 
	Then $T\unlhd G$ is normal in $G$.
	\\
	Let $\alpha \in \Irr(T) $  be the Clifford correspondent of 
	$\chi $ lying above $\psi $, i.e. $\alpha _{|N} = e \psi $  and 
	$\chi = \alpha ^G$, and put $L:=\Q(\alpha )$. 
	Then $[L:K] = [G:T]$ and $G/T \cong \Gal (L/K) $.
\end{lemma}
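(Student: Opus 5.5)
The plan is to go through the intermediate subgroup of all elements of $G$ that move $\psi$ to a Galois conjugate over $K$. Minimality (Assumption \ref{assumption}) will force this subgroup to be all of $G$. The Galois action on the Clifford correspondent $\alpha$ then gives an isomorphism $G/T\cong\Gal(L/K)$, and normality of $T$ comes for free as a kernel. Throughout, $\chi=\alpha^G$, $\alpha_{|N}=e\psi$, and $\alpha$ is the unique character of $T$ with these properties.

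\emph{Step 1: a uniqueness observation.} Let $\sigma\in\Gal(\C/\Q)$ fix $K$ (pointwise). Then $\sigma(\chi)=\chi$. Hence $\sigma(\psi)$ is an irreducible constituent of $\chi_{|N}$, so by Clifford's theorem $\sigma(\psi)=\psi^g$ for some $g\in G$. Both $\sigma(\alpha)$ and $\alpha^g$ lie in $\Irr(I_G(\psi^g))$, restrict to $N$ as $e\psi^g$, and induce to $\chi$. By the uniqueness of the Clifford correspondent, $\sigma(\alpha)=\alpha^g$.

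\emph{Step 2: the subgroup $U$.} Put
$$U:=\{g\in G\mid \psi^g=\sigma(\psi) \mbox{ for some } \sigma\in\Gal(\C/K)\}.$$
This is a subgroup containing $T$; set $\beta:=\alpha^U$. Since $\beta^G=\chi$ is irreducible, $\beta$ is irreducible. By Frobenius reciprocity, $(\chi_{|U},\beta)_U=(\chi_{|T},\alpha)_T=1$, so $\beta$ occurs in $\chi_{|U}$ with multiplicity $1$.

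Next we show $\Q(\beta)=K$. For $\sigma$ fixing $K$, Step 1 gives $\sigma(\beta)=\sigma(\alpha)^U=(\alpha^g)^U$, where $\psi^g=\sigma(\psi)$. By definition of $U$ we have $g\in U$, so $(\alpha^g)^U=\beta$. Thus $\Q(\beta)\subseteq K$. Conversely, $K=\Q(\chi)\subseteq\Q(\beta)$ because $\chi=\beta^G$. Corollary \ref{brauer1} now gives $[\chi]=[\beta]\otimes K=[\beta]$. Hence $\Q(\beta)=K$, $m_p(\beta)=m_p(\chi)$ for all $p$, and $P(\beta)=P(\chi)$.

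If $U<G$, then $U$ is solvable and proper, so by minimality Theorem \ref{main} holds for $\beta$. The numbers $[F(K[\zeta_{P(\chi)}]):K]$ and $[K[\zeta_{2P(\chi)}]:K]$ would then divide $\beta(1)$. Since $\beta(1)=\chi(1)/[G:U]$ divides $\chi(1)$, this contradicts that $\chi$ is a counterexample. Therefore $U=G$.

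\emph{Step 3: the homomorphism to $\Gal(L/K)$.} Since $U=G$, every $g\in G$ satisfies $\psi^g=\sigma(\psi)$ for some $\sigma$ fixing $K$, and Step 1 then gives $\alpha^g=\sigma(\alpha)$. The restriction $\sigma_g:=\sigma_{|L}\in\Gal(L/K)$ is uniquely determined by $g$, because $L=\Q(\alpha)$. The map $g\mapsto\sigma_g$ is a homomorphism (up to the harmless order convention, as the Galois group is abelian). The computation is
$$\alpha^{gh}=(\sigma_g\alpha)^h=\sigma_g(\alpha^h)=\sigma_g\sigma_h(\alpha).$$
Its kernel is $I_G(\alpha)$, which equals $T$. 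Indeed, $\alpha^g$ lies over $\psi^g$, so $\alpha^g=\alpha$ forces $\psi^g=\psi$. Conversely, $g\in T$ acts trivially on $\Irr(T)$ by inner conjugation. Hence $T\unlhd G$ and $G/T$ embeds in $\Gal(L/K)$.

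For surjectivity, take $\tau\in\Gal(L/K)$ and extend it to $\C$. Then $\tau(\alpha)$ lies over the constituent $\tau(\psi)=\psi^g$ and induces to $\chi$. By Step 1, $\tau(\alpha)=\alpha^g$, so $\tau=\sigma_g$. Thus $G/T\cong\Gal(L/K)$, and taking orders gives $[L:K]=[G:T]$.

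The main obstacle is Step 2. One must check carefully that the multiplicity is $1$ and that $\Q(\beta)=K$ exactly, so that Corollary \ref{brauer1} applies and transports both $P(\chi)$ and the field $K$ down to $U$. Once these hold, the minimality argument and the Galois bookkeeping in Step 3 are routine.
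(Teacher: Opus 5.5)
Your proof is correct and uses the same mechanism as the paper. You take an intermediate subgroup between $T$ and $G$ whose induced character $\beta$ has $\Q(\beta)=K$ and occurs with multiplicity one in $\chi_{|U}$, so that Corollary \ref{brauer1} and the minimality of $G$ force that subgroup to be all of $G$. The difference is organisational. The paper invokes minimality twice: first for $N_G(T)$ to get $T\unlhd G$, then for the subgroup of elements acting on $\alpha$ through $\Gal(L/K)$. Your subgroup $U$ is defined via $\psi$ on the normal subgroup $N$, so it makes sense before normality of $T$ is known; one application of minimality suffices, and $T\unlhd G$ comes out as the kernel of $G\to\Gal(L/K)$. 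You also spell out the Clifford-uniqueness step $\sigma(\alpha)=\alpha^g$ and the bijectivity of $G/T\to\Gal(L/K)$, which the paper leaves implicit.
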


\begin{proof}
Put $H:=N_G(T)$ and let $\beta := \alpha^H$. 
 Then $T \leq H \leq G$ and  $ \beta^G = (\alpha^H)^G = \alpha^G = \chi$. 
In particular $\beta \in \Irr(H)$ and $K \leq \Q(\beta ) \leq L$.

We first show that $\Q(\beta) = K$.
Let $\sigma \in \Gal(L/K)$. Since $\chi^\sigma = \chi$, 
	the character $\psi^\sigma \in \Irr(N)$ 
	is again an irreducible constituent of $\chi_{\mid N}$, so 
	there is $x\in G $ with 
	 $\psi^\sigma = \psi^x$. 
      On the other hand,
		   $T^x = I_G(\psi ^x) =  I_G(\psi^\sigma) = I_G(\psi) = T $, 
                        since the Galois conjugation commutes with the conjugation 
			action. 
	So $x \in N_G(T) = H$ and hence $\beta ^{\sigma } = \beta $. 

	Now $\Q(\beta ) = \Q(\chi) = K$ so  $[\beta ] = [\chi ] \in \Br(K)$ 
	by Corollary \ref{brauer1}. 
	The minimality of $G$ hence implies that $H=G$ and hence $T\unlhd G$.

	As in the proof of Theorem \ref{Yamadagen} (a) we now put
	$$H:=\{ g\in G \mid \exists \sigma \in \Gal(L/K ) : \alpha (gng^{-1}) = \sigma(\alpha (n) ) \mbox{ for all } n\in T \} .$$
	Then $\beta := \alpha ^H \in \Irr(H) $ satisfies $\Q(\beta ) = K $ 
	and  $\chi = \beta ^G$. This again implies that $[\beta ] = [\chi ]$ 
	 by Corollary \ref{brauer1}.
        The minimality of $G$ implies that $H=G$ and hence 
	$G/T = H/T \cong \Gal (L/K)$.
\end{proof}


We now construct a chief series of $G$ as follows: 
Let $G_1$ be a minimal normal subgroup of $G$ and 
$\chi _1 \in \Irr(G_1)$ be a constituent of $\chi _{|G_1} $. 
Let $T_1:= I_G(\chi _1)$ be the inertia group of $\chi _1$. 
Then $$G_1 \leq T_1 \leq G$$ and also $T_1$ is normal in $G$. 
If $T_1\neq G_1$ then 
let $G_2\leq T_1$ be such that $G_2/G_1$ is a minimal normal subgroup of $G/G_1$ 
and put $T_2 := I_G(\chi _2)$ where $\chi_2 \in \Irr (G_2)$ is a constituent of $\chi _{|G_2} $ such that $(\chi_2)_{|G_1} $ is a multiple of $ \chi _1$.

As $I_G(\chi_2)$ also stabilises the restriction of $\chi _2$ to the normal 
subgroup $G_1$ we have that $T_2 \leq T_1$. 
We continue like this until $G_n = T_n$ to obtain a normal series 
$$ 1=G_0 < G_1 < G_2 < \ldots < G_n  =T_{n}  \unlhd G $$ 
such that $Q_j:=G_j/G_{j-1} $ are minimal normal subgroups of $G/G_{j-1}$ for 
$j=1,\ldots , n$. 
In particular all these $Q_j$ are elementary abelian. 

\begin{lemma} \label{oddzeta}
	Put $T:=T_n$ and write
	$\chi _{|T} = \alpha _1+ \ldots + \alpha _t$. Put $\alpha := \alpha_1$. 
	Then 
		 $\zeta _{P(\chi )} \in \Q(\alpha )$.
\end{lemma}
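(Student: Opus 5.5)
The plan is to argue prime by prime. Fix $p\in\PP$ with $m_p(\chi)>1$. For $p=2$ there is nothing to show, since $\zeta_2=-1$. So let $p$ be odd, and suppose for a contradiction that $\zeta_p\notin L:=\Q(\alpha)$.

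First record the structure. Since $T=T_n=G_n=I_G(\chi_n)$, the character $\chi_n$ has inertia group $G_n$ itself. Hence $\alpha=\chi_n$ (up to conjugation) and $\chi=\alpha^G$, so $\chi_{|T}=\alpha_1+\ldots+\alpha_t$ is multiplicity free. By Lemma \ref{TisNormal}, $G/T\cong\Gal(L/K)$, so $[G:T]=[L:K]$.

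The core step is to show by induction along the series $1=G_0<G_1<\ldots<G_n=T$ that every $\chi_j$ is $p'$-special, using Lemma \ref{pspeciallemma}. The invariance hypothesis holds at each step, because $G_{j+1}\le T_{j+1}\le T_j=I_G(\chi_j)$, so $\chi_j$ is $G_{j+1}$-invariant. If $Q_{j+1}$ is a $q$-group with $q\ne p$, then Lemma \ref{pspeciallemma}(i) gives that $\chi_{j+1}$ is $p'$-special. If $Q_{j+1}$ is a $p$-group, Lemma \ref{pspeciallemma}(ii) gives a unique extension $\hat\chi_j$ with $o(\hat\chi_j)$ prime to $p$. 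By uniqueness $\hat\chi_j$ is Galois-stable over $\Q(\chi_j)$, and so $\Q(\hat\chi_j)=\Q(\chi_j)\le\Q(\chi_{j+1})$. By Theorem \ref{Gallagher}, $\chi_{j+1}=\lambda\hat\chi_j$ for a linear $\lambda\in\Irr(Q_{j+1})$.

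If $\lambda\ne1$, then $\chi_{j+1}$ takes values $\lambda(x)\hat\chi_j(x)$ for suitable $x\in G_{j+1}$. I would try to conclude that $\zeta_p\in\Q(\chi_{j+1})$. Since $\chi_{j+1}$ is a constituent of $\alpha_{|G_{j+1}}$ with $T$-invariant restrictions up to the chosen orbit, this gives $\zeta_p\in\Q(\alpha)$ (via the Clifford inclusion of fields applied inside $T$). That contradicts our assumption, so $\lambda=1$ and $\chi_{j+1}=\hat\chi_j$ is $p'$-special.

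I expect this case to be the main obstacle. Two things need care here. The first is the possibility that $\chi_j$ does not extend and is instead fully ramified in $G_{j+1}$; I would try to exclude this because then $\chi_{j+1}$ is not the $p'$-special extension but still has $p$-power ramification index. The second is justifying the field inclusion $\Q(\chi_{j+1})\le\Q(\alpha)$. If needed, I would replace $\chi_{j+1}$ by the Galois/$T$-orbit sum argument, working with $p$-adic fields $\Q_p(\cdot)$ throughout.

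Granting this, $\alpha=\chi_n$ is $p'$-special, so $m_p(\alpha)=1$ by Remark \ref{remSchurIndex1}. It remains to transfer this to $\chi$. Since $\zeta_p\notin L$, I would show that the local extension $E=\Q_p(\alpha)$ over $\Q_p(\chi)$ is unramified at $p$, since the ramified part at $p$ of abelian fields comes from $\zeta_{p^k}$-parts. Theorem \ref{Yamadagen} then gives $m_p(\chi)\mid\gcd(\alpha(1),[E:K_p])$. From $p\nmid\alpha(1)$ and Theorem \ref{unidist}(iii), $m_p(\chi)\mid p-1$. Alternatively, Theorem \ref{brauer} with $U=T$, multiplicity $1$ and degree $[L:K]$ lets us compare the prime-power parts of $m_p(\chi)$ and $m_p(\alpha)=1$. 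Finally, I would use the minimality in Assumption \ref{assumption}, applying Theorem \ref{main} to the proper subgroup $T$ if $T<G$, to force $m_p(\chi)=1$. This contradicts $p\mid P(\chi)$, so $\zeta_p\in\Q(\alpha)$.
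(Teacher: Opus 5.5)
Your induction showing that every $\chi_j$, and hence $\alpha=\chi_n$, is $p'$-special is the paper's argument. The step you leave open ("if $\lambda\neq 1$ then $\zeta_p\in\Q(\chi_{j+1})$") is settled by determinants: $\det\chi_{j+1}=\lambda^{\chi_j(1)}\det\hat\chi_j$ with $p\nmid\chi_j(1)$ and $o(\hat\chi_j)$ prime to $p$. So $p\mid o(\chi_{j+1})$, and $\zeta_p\in\Q(\det\chi_{j+1})\subseteq\Q(\chi_{j+1})$. The field inclusion you worry about is just the chain $\Q(\chi_1)\le\cdots\le\Q(\chi_n)=\Q(\alpha)$, since $(\chi_{j+1})_{|G_j}$ is a multiple of $\chi_j$. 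Full ramification cannot occur, because Lemma \ref{pspeciallemma}(ii) already gives an extension.

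The genuine gap is the final step, from $m_p(\alpha)=1$ to $m_p(\chi)=1$. None of your three routes achieves it.
\begin{itemize}
\item $\zeta_p\notin\Q(\alpha)$ does not make $\Q_p(\alpha)/\Q_p(\chi)$ unramified. For example, $\Q(\sqrt{-7})\subseteq\Q[\zeta_7]$ does not contain $\zeta_7$, yet $7$ ramifies in it.
\item Even granting Theorem \ref{Yamadagen}, you only get $m_p(\chi)\mid\gcd(\alpha(1),[E:\Q_p(\chi)])$. With $p\nmid\alpha(1)$ this leaves $m_p(\chi)\mid p-1$, which is already Theorem \ref{unidist}(iii) and gives no contradiction. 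For odd $p$ the local index is always prime to $p$, so $p\nmid\alpha(1)$ carries no information. (The paper uses Theorem \ref{Yamadagen} only at $p=2$, where $m_2\in\{1,2\}$ and odd $\alpha(1)$ do suffice.)
\item Theorem \ref{brauer} with $U=T$ controls only the $q$-parts for primes $q\nmid t=[L:K]$, and says nothing when $q\mid t$.
\item Applying Theorem \ref{main} to $(T,\alpha)$ constrains $\alpha(1)$, not $m_p(\chi)$.
\end{itemize}

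The missing idea is to pass to $p$-modular Brauer characters of $G$ itself.
\begin{enumerate}
\item All $G$-conjugates $\alpha_1,\ldots,\alpha_t$ of $\alpha$ are $p'$-special, so by Lemma \ref{SchurIndex1} their restrictions $\alpha_i'$ to the $p$-regular elements are irreducible Brauer characters of $T$.
\item These $\alpha_i'$ are pairwise distinct, because a $p'$-special character is determined by its restriction to a Hall $p'$-subgroup \cite[Proposition 1.6]{GAJENDRAGADKAR1979237}.
\item Hence $\alpha'$ has inertia group $T$ in $G$, and $\chi'=(\alpha')^G$ is an irreducible Brauer character.
\item Then $d_{\chi,\chi'}=1$ and $\Q_p(\chi,\chi')=\Q_p(\chi)$, so Remark \ref{remSchurIndex1} gives $m_p(\chi)=1$, the desired contradiction.
\end{enumerate}
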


\begin{proof}
For $j=1,\ldots , n$ the restriction of $\chi _j$ to $G_{j-1}$ is 
	a multiple of $\chi _{j-1}$, in particular 
	$$ \Q(\chi_1) \leq \Q (\chi_2 ) \leq \ldots \leq \Q(\chi_n)\leq \Q(\alpha ) .$$
	\\
	Assume that $p\in {\mathcal P}(\chi )$ is odd 
	and that $\Q[\zeta _p] \not\subseteq \Q(\alpha )$. 
	Then $\Q[\zeta _p] \not\subseteq \Q(\chi _j )$ for all $j$.
We then show inductively that every $\chi_j$ is $p'$-special: 
\\
	Clearly $\chi_0 = 1$ is $p'$-special. \\
	Let $j>0$ and assume that $\chi _{j-1}$ is $p'$-special. 
	\\
	If $ Q_j$ is not a $p$-group, then by 
	Lemma \ref{pspeciallemma} also $\chi_j$ is $p'$-special.  \\
	Assume that  $Q_j $ is an elementary abelian $p$-group. 
	As $G_j\leq I_G(\chi_{j-1})$ 
	 Lemma \ref{pspeciallemma} says that the character $\chi _{j-1}$ has a 
	unique $p'$-special extension $X:=\hat{\chi}_{j-1} \in \Irr(G_j)$. 
	Then $X_{|G_{j-1}} = \chi _{j-1} \in \Irr(G_{j-1})$. 
	By Theorem \ref{Gallagher} the induced character is
	$$ \chi_{j-1} ^{G_j} = \sum _{\lambda \in \Irr(Q_j) } \lambda X .$$ 
	By Frobenius reciprocity also $\chi _j$ is a constituent of 
	$\chi _{j-1}^{G_j}$, so there is some $\lambda \in \Irr(Q_j)$ 
	such that $\chi_j = \lambda X$. As all irreducible characters
	$\lambda $ of the abelian group $Q_j$ are linear, 
	$\chi _j(1) = X(1) = \chi _{j-1}(1) $. 
	Now $\chi _{j-1}$ is $p'$-special, hence $\chi _j(1)$ is not a multiple
	of $p$. By assumption $\Q[\zeta _p] $ is not a subfield of $\Q(\chi _j)$, 
	then also the determinantal order of $\chi _j$ is not divisible 
	by $p$ and hence $\chi _j$ is $p'$-special.
	\\
	In particular also $\alpha $ is $p'$-special. 
	Repeting the argument with $\alpha = \alpha _i$ instead of $\alpha = \alpha _1$ 
	(for some $1\leq i \leq t$)
	we see that all the $\alpha _i$ are $p'$-special. 
	By Lemma \ref{SchurIndex1} 
	restriction $\alpha _i'$ of $\alpha_i $ to the $p$-regular elements of
	$T$ is an irreducible Brauer character.
\\
	Now \cite[Proposition 1.6]{GAJENDRAGADKAR1979237}
	(see also \cite[Lemma 3.5]{Isaacspaper})
	shows that the restriction of  
	the $\alpha _i$ to a Hall $p'$-subgroup of $T$ remain pairwise distinct, 
	so also the Brauer characters $\alpha _i'$ 
	are $t$ distinct irreducible $p$-Brauer characters. 
	So the restriction $\chi '$ of $\chi $ to the 
	$p$-regular elements of $G$ is the induced character
	$$\chi ' = ( \alpha ') ^G$$ 
	and hence an irreducible Brauer character of $G$.
	By Remark \ref{remSchurIndex1} this implies that 
	$m_p(\chi ) =1 $ contradicting the assumption.
\end{proof} 

For $G$, $\chi $, 
$K=\Q(\chi )$ as in Assumption \ref{assumption}
and $T$, $\alpha \in \Irr(T)$, $E:=\Q(\alpha )$ and $t=[G:T]=[E:K]$ 
as in Lemma \ref{oddzeta} 
we hence have
	\begin{itemize}
		\item[(a)] $K\leq E $ and $[E : K] = t$ (by Lemma \ref{TisNormal}). 
		\item[(b)] $\zeta _{P(\chi )} \in E$ 
			(by Lemma \ref{oddzeta}).
		\item[(c)] $[F(E) : E ] \bigm| \alpha (1)$
			(by Theorem \ref{Cram}).
		\item[(d)] $\zeta _4 \in F(E) $ if $2$ ramifies in 
			$E$ over $\Q $. 
	\end{itemize} 

\begin{corollary} \label{corzetaodd} 
	$[F(K[\zeta _{P(\chi )} ]) : K ] \bigm| \chi (1) $, so $\chi $ 
	satisfies Theorem \ref{main} (a). 
\end{corollary}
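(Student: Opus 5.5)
We combine facts (a)--(c) above with the multiplicativity of degrees in the tower $K\le E\le F(E)$. Since $\chi=\alpha^G$ with $[G:T]=t$, we have $\chi(1)=t\,\alpha(1)=[E:K]\,\alpha(1)$. By (c), $[F(E):E]$ divides $\alpha(1)$. Therefore
$$[F(E):K] = [F(E):E]\,[E:K] \quad\mbox{divides}\quad \alpha(1)\,t=\chi(1).$$
It remains to compare $F(K[\zeta_{P(\chi)}])$ with $F(E)$.

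By (b), $\zeta_{P(\chi)}\in E$, so $K[\zeta_{P(\chi)}]\le E\le F(E)$. Now $F(E)=\Q[\zeta_{f(E)}]$ is a cyclotomic field containing $K[\zeta_{P(\chi)}]$. Hence the conductor $f(K[\zeta_{P(\chi)}])$ divides $f(E)$. Here we use the standard fact that an abelian field lies in $\Q[\zeta_n]$ if and only if its conductor divides $n$; the one subtlety is the convention $\Q[\zeta_n]=\Q[\zeta_{2n}]$ for odd $n$, but $f(\cdot)$ is defined as the smallest such $n$, so the minimal conductor still divides $f(E)$. Consequently
$$K\le K[\zeta_{P(\chi)}]\le F(K[\zeta_{P(\chi)}])\le F(E),$$
and $[F(K[\zeta_{P(\chi)}]):K]$ divides $[F(E):K]$, which divides $\chi(1)$. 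This is Theorem \ref{main} (a).

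The argument is mostly bookkeeping. The only point needing care is the inclusion $F(K[\zeta_{P(\chi)}])\le F(E)$, i.e. that conductor fields are monotone under inclusion of abelian fields. I would justify this directly from the characterisation of $f$ as the least $n$ with the field inside $\Q[\zeta_n]$, together with $\Q[\zeta_a]\cap\Q[\zeta_b]=\Q[\zeta_{\gcd(a,b)}]$.
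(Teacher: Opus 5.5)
Your proof is correct and follows the paper's argument: $K[\zeta_{P(\chi)}]\le E$ gives $F(K[\zeta_{P(\chi)}])\le F(E)$, and $[F(E):K]=[F(E):E]\,[E:K]$ divides $\alpha(1)\,t=\chi(1)$. Your justification that conductor fields are monotone, via $\Q[\zeta_a]\cap\Q[\zeta_b]=\Q[\zeta_{\gcd(a,b)}]$, fills in a step the paper leaves implicit.
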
 

\begin{proof}
	As $K[\zeta _{P(\chi )} ]  \leq E$ we have that 
	$$[F(K[\zeta _{P(\chi )} ]) : K ] \bigm| [F(E) : K] = [F(E):E][E:K]  \bigm| \alpha(1) t = \chi(1) .$$
\end{proof}

In particular $\chi $ satisfies Theorem \ref{main} if 
$2\not\in {\mathcal P}(\chi )$. 
So in the following we assume that $m_2(\chi ) \neq 1$ and hence 
 $m_2(\chi ) =2$.

\begin{lemma} \label{remainsodd} 
If $2$ is ramified in $E/\Q$, then 
$[F(K[\zeta _{2 P(\chi )}]) : K ] \bigm| \chi (1) $.
In particular $\chi $ satisfies Theorem \ref{main} (b).
\end{lemma}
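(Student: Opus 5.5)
We mimic the proof of Corollary \ref{corzetaodd}, replacing $P(\chi)$ by $2P(\chi)$. The whole argument reduces to one field inclusion,
$$K[\zeta_{2P(\chi)}] \leq F(E).$$
Once this holds, taking conductor fields (which is monotone under inclusion of abelian fields) gives $F(K[\zeta_{2P(\chi)}]) \leq F(F(E)) = F(E)$. Then, using (a) and (c) from the list before Corollary \ref{corzetaodd}, we obtain
$$[F(K[\zeta _{2P(\chi )}]) : K ] \bigm| [F(E):K] = [F(E):E]\,[E:K] \bigm| \alpha(1)\, t = \chi(1).$$

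To prove the inclusion, note first that $K \leq E \leq F(E)$ by (a). By (b) we have $\zeta_{P(\chi)} \in E \leq F(E)$. Since $2$ is ramified in $E/\Q$, fact (d) gives $\zeta_4 \in F(E)$. Hence $F(E)$ contains $\zeta_{P(\chi)}$ and $\zeta_4$, and therefore contains $\zeta_{2P(\chi)}$.

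The only point needing care is the parity of $P(\chi)$. We are now assuming $m_2(\chi)=2$, so $P(\chi)$ is even, and $\zeta_{2P(\chi)}$ is a primitive $4\cdot(P(\chi)/2)$-th root of unity. Here $P(\chi)/2$ is odd, so $\zeta_{2P(\chi)}$ is generated by $\zeta_4$ and $\zeta_{P(\chi)/2}$. The latter is a power of $\zeta_{P(\chi)}$, so both generators lie in $F(E)$, as claimed.

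I would also justify (d) briefly: $F(E)=\Q[\zeta_{f}]$ with $f=f(E)$, and $2$ ramifies in $E\leq \Q[\zeta_f]$. This forces $2\mid f$, and since the conductor is never $\equiv 2 \pmod 4$, we get $4\mid f$. I expect this to be the only subtle step. The rest is bookkeeping.

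Finally, part (b) of Theorem \ref{main} follows directly. We have $K[\zeta_{2P(\chi)}]\leq F(K[\zeta_{2P(\chi)}])$, so
$$[K[\zeta_{2P(\chi)}]:K] \bigm| [F(K[\zeta_{2P(\chi)}]):K] \bigm| \chi(1).$$
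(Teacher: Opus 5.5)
Your proposal is correct and follows the paper's proof. You use the same inclusion $K[\zeta_{2P(\chi)}] \leq F(E)$, obtained from $\zeta_{P(\chi)} \in E$ and $\zeta_4 \in F(E)$, followed by monotonicity of the conductor field and the chain $[F(E):K]=[F(E):E][E:K] \mid \alpha(1)t=\chi(1)$. Your added justification that ramification of $2$ forces $4 \mid f(E)$ only spells out what the paper leaves implicit.
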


\begin{proof} 
	If $2$ ramifies in $E$ then it also ramifies in 
	$F(E)$, so $\zeta _4 \in F(E)$. In particular 
	$K[\zeta _{2P(\chi )}]$ is a subfield of $F(E)$ and hence 
	also $F(K[\zeta _{2P(\chi )}]) \leq F(E) $ so again
	$$[F(K[\zeta _{2P(\chi )}]) : K] \bigm| [F(E) : K] = 
	[F(E) : E] [E: K] \bigm| \alpha(1) t = \chi (1) .$$
\end{proof} 

\begin{lemma}\label{evenalpha} 
	If $\alpha (1) $ is even then 
	$[K[\zeta _{2 P(\chi )}] : K ] \bigm| \chi (1) $,
so $\chi $ satisfies Theorem \ref{main} (b).
\end{lemma}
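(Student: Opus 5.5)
We aim to show that $[K[\zeta_{2P(\chi)}]:K]$ divides $\chi(1)=t\,\alpha(1)$, where $t=[E:K]$. The key observation is that the $2$-part of $P(\chi)$ contributes nothing: since $2\mid P(\chi)$ we have $\zeta_{2P(\chi)}$ generating the same field as $\zeta_{P(\chi)}$ when $P(\chi)$ is even, but $\zeta_{P(\chi)}$ only involves the odd part $P'$ of $P(\chi)$ together with $\zeta_2=-1$. More precisely, write $P(\chi)=2P'$ with $P'$ odd. Then $\zeta_{2P(\chi)}=\zeta_{4P'}$ generates $\Q[\zeta_4,\zeta_{P'}]$, whereas $\zeta_{P(\chi)}$ generates $\Q[\zeta_{P'}]$. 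By (b), $\zeta_{P'}\in E$, so $K[\zeta_{P'}]\le E$. Hence $K[\zeta_{2P(\chi)}]\le E[\zeta_4]$, and we get
$$[K[\zeta_{2P(\chi)}]:K] \bigm| [E[\zeta_4]:K] = [E[\zeta_4]:E]\,[E:K],$$
where $[E[\zeta_4]:E]\in\{1,2\}$.

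The plan is therefore to prove that $[E[\zeta_4]:E]\cdot t$ divides $\alpha(1)\,t$, i.e. that $[E[\zeta_4]:E]$ divides $\alpha(1)$. This is immediate when $\alpha(1)$ is even, which is exactly the hypothesis. A small subtlety is that divisibility of a degree of an intermediate extension needs the tower: $K\le K[\zeta_{2P(\chi)}]\le E[\zeta_4]$ are finite extensions, so the degree of the lower step divides the total degree $[E[\zeta_4]:K]$. This in turn divides $2t$, and $2t$ divides $\alpha(1)t=\chi(1)$.

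I expect no serious obstacle. The only point requiring care is the identification $\Q[\zeta_{2P(\chi)}]=\Q[\zeta_4,\zeta_{P'}]$, which uses that $P(\chi)$ is squarefree and even (we are in the case $m_2(\chi)=2$), together with the fact from Lemma \ref{oddzeta} that only the odd primes need to be adjoined to $E$. Note that Lemma \ref{oddzeta} literally gives $\zeta_{P(\chi)}\in E$, and its proof only treats odd $p$, which is all we need here.
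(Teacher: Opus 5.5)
Your argument is correct and is essentially the paper's proof. Both bound $[K[\zeta_{2P(\chi)}]:K]$ by $2t$ using $K[\zeta_{P(\chi)}]\le E$ plus at most one quadratic step: the paper uses the tower $K\le K[\zeta_{P(\chi)}]\le K[\zeta_{2P(\chi)}]$, and you use the containment $K[\zeta_{2P(\chi)}]\le E[\zeta_4]$. Your opening sentence is wrong as stated, since $\zeta_{2P(\chi)}$ does not generate the same field as $\zeta_{P(\chi)}$ when $P(\chi)$ is even (that is exactly where the factor $2$ comes from), but the ``more precisely'' version you actually use is right.
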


\begin{proof}
	We have that $K[\zeta _{P(\chi )}] \leq E$
	and $[K[\zeta _{2 P(\chi )}] : K[\zeta _{P(\chi )}] \bigm| 2$. 
	So 
$$[K[\zeta _{2 P(\chi )}] : K ] \bigm| 2 [E:K] = 2 t \bigm| \alpha(1) t = \chi(1) .$$
\end{proof}

By Lemma \ref{remainsodd} and \ref{evenalpha} we are left with 
the situation that $\Q(\alpha )/\Q $ is not ramified at the prime 2 and
$\alpha (1)$ is odd.  Then $m_2(\alpha ) =1$ (see Remark \ref{mdivdeg} and 
Theorem \ref{unidist} (iii)).
Therefore 
Theorem \ref{Yamadagen} tells us that 
$m_2(\chi ) = 1$.
This finishes the proof of Theorem \ref{main}. 

{\sc AI Declaration.} 
Some proofs and statements from the first version have been corrected based on 
a very helpful AI-assisted preliminary report provided by the editor.

\end{document}